\newtheorem{remark}{Remark}
\newtheorem{theorem}{Theorem}
\newtheorem{definition}{Definition}
\journal{TBA}
\let\@tempa}
\def\fix@elsarticle{\iffalse{\fi}\romannumeral-`0}
\def\ps@pprintTitle{%
 \let\@oddhead\@empty
 \let\@evenhead\@empty
 \def\@oddfoot{}%
 \let\@evenfoot\@oddfoot}
\begin{document}

\begin{frontmatter}

\title{Analysis and simulation of a variational stabilization for the Helmholtz equation with noisy Cauchy data}
\tnotetext[mytitlenote]{This work was supported by the Faculty Research Awards Program (FRAP) at Florida A\&M University, under the project ``Approximation of a time-reversed reaction-diffusion system in cancer cell population dynamics” (007633).}

\author[mymainaddress]{Vo Anh Khoa}\corref{mycorrespondingauthor}
\cortext[mycorrespondingauthor]{Corresponding author}
\ead{anhkhoa.vo@famu.edu}

\author[mysecondaryaddress]{Nguyen Dat Thuc}

\author[mymainaddress]{Ajith Gunaratne}

\address[mymainaddress]{Department of Mathematics, Florida A\&M University, Tallahassee, FL 32307, USA}
\address[mysecondaryaddress]{Department of Mathematics and Computer Science, VNUHCM-University of Science, Ho Chi Minh City, Vietnam}

\begin{abstract}
This article considers a Cauchy problem of Helmholtz equations whose solution is well known to be exponentially unstable with respect to the inputs. In the framework of variational quasi-reversibility method, a Fourier truncation is applied to appropriately perturb the underlying problem, which allows us to obtain a stable approximate solution. The corresponding approximate problem is of a hyperbolic equation, which is also a crucial aspect of this approach. Error estimates between the approximate and true solutions are derived with respect to the noise level. From this analysis, the Lipschitz stability with respect to the noise level follows.  Some numerical examples are provided to see how our numerical algorithm works well.

\end{abstract}

\begin{keyword}
Stabilization\sep Helmholtz equation \sep Ill-posed problems \sep Convergence \sep Error estimates
\MSC[2010] 65J05 \sep 65J20 \sep 35K92 
\end{keyword}

\end{frontmatter}

\section{Introduction}

\subsection{Statement of the Cauchy problem}

In this work, we are concerned with the reconstruction problem of electromagnetic field from its knowledge on a part of boundary of the physical region $\Omega$. Here, $\Omega = (0,1)\times (0,1)$ is our computational domain of interest, but it can be extended easily to $(0,a_1)\times(0,a_2)$, where $a_1,a_2$ are two positive numbers. Often, the propagation of the electromagnetic wave field is governed by the
system of the Maxwell's equations for the electric field $\mathbf{E}=\mathbf{E}(x,y,t)$ and the magnetic field $\mathbf{B}=\mathbf{B}(x,y,t)$. Considering $\Omega$ as a homogeneous medium in a region with free currents and charges, this system can be reduced to the classical wave equations, cf. \cite{MaxBorn2019},
\begin{align}\label{wave}
	\frac{\partial^{2}\mathbf{E}}{\partial t^{2}}-c^{2}\Delta\mathbf{E}=0,\quad\frac{\partial^{2}\mathbf{B}}{\partial t^{2}}-c^{2}\Delta\mathbf{B}=0\quad \text{in }\Omega\times(0,T),
\end{align}
where $c>0$ is the speed of light and $T>0$ is the travel time. Consider the frequency $\omega>0$. For $\text{i}=\sqrt{-1}$, we take $\mathbf{E}(x,y,t) = e^{\text{i}\omega t}\mathbf{E}(x,y)$ and $\mathbf{B}(x,y,t) = e^{\text{i}\omega t}\mathbf{B}(x,y)$. Then, setting $k = \omega/c >0$, it follows from system \eqref{wave} that
\begin{align}\label{system1}
	\Delta \mathbf{E}(x,y) + k^2 \mathbf{E}(x,y) = 0,\quad \Delta \mathbf{B}(x,y) + k^2 \mathbf{B}(x,y) = 0,
\end{align}
which form a system of Helmholtz equations. Since system \eqref{system1} is uncoupled and linear with respect to each component of $\mathbf{E}(x,y)$ and $\mathbf{B}(x,y)$, it is pertinent to solve the following model:
\begin{align}\label{helmholtz}
	\Delta u\left(x,y\right)+k^{2}u\left(x,y\right)=0\quad\text{in }\Omega.
\end{align}
Note that in \eqref{system1}, $\mathbf{E}(x,y)$ and $\mathbf{B}(x,y)$ are complex-valued components, but it is sufficient to find a real-valued function $u=u(x,y)$ in \eqref{helmholtz}. Physically, fields vanish far from the axes and thus, we can assume that the electromagnetic field vanishes on the sides $\left\{ y=0\right\} $ and $\left\{ y=1\right\} $ of the computational domain $\Omega$. Mathematically, we consider
\begin{align}\label{boundary}
	u(x,0) = u(x,1) = 0\quad \text{for } x\in (0,1).
\end{align}
On the other hand, we assume to measure the electromagnetic Cauchy data at $\left\{ x=0\right\} $,
\begin{align}\label{boundary1}
	u(0,y) = u_0(y),\quad u_x(0,y) = u_1(y)\quad \text{for } y\in (0,1).
\end{align}

Cf. \cite{Tuan2017}, we remark that the second data (i.e. the Neumann data at $x=0$) in \eqref{boundary1} can be reduced to the zero boundary condition. In fact, let $U=U(x,y)$ be a solution to the following system:
\begin{align}\label{UU}
	\begin{cases}
		\Delta U\left(x,y\right)+k^{2}U\left(x,y\right)=0 & \text{in }\Omega,\\
		U\left(x,0\right)=U\left(x,1\right)=0 & \text{for }x\in\left(0,1\right),\\
		U_{x}\left(0,y\right)=u_{1}\left(y\right),U\left(1,y\right)=0 & \text{for }y\in\left(0,1\right).
	\end{cases}
\end{align}
Next, consider $V=V(x,y)$ as a solution to the following system:
\begin{align}\label{VV}
	\begin{cases}
		\Delta V\left(x,y\right)+k^{2}V\left(x,y\right)=0 & \text{in }\Omega,\\
		V\left(x,0\right)=V\left(x,1\right)=0 & \text{for }x\in\left(0,1\right),\\
		V\left(0,y\right)=u_{0}\left(y\right)-U\left(0,y\right),V_{x}\left(0,y\right)=0 & \text{for }y\in\left(0,1\right).
	\end{cases}
\end{align}
With \eqref{UU} and \eqref{VV}, it is clear that the solution $u$ to system \eqref{helmholtz}--\eqref{boundary1} can be computed via $u = U + V$. By \cite[Lemma 1]{Tuan2017}, we know that system \eqref{UU} is well-posed with $U$ in $H^2(\Omega)$ when $u_1 \in L^2(0,1)$ and thus, $U(0,y)$ exists in $H^2(0,1)$ by the embedding $H^2(\Omega)\subset C([0,1];H^2(0,1))$. Henceforth, from \eqref{VV}, instead of working on the Cauchy data \eqref{boundary1} we can assume that $u_1 = 0$ in \eqref{boundary1} in our analysis below.

Combining \eqref{helmholtz}, \eqref{boundary} and \eqref{boundary1} with $u_1 = 0$ forms our Cauchy problem for the Helmholtz equation. In this scenario, we want to reconstruct the whole wave field in $\Omega$ and especially, the field at the boundary $x=1$.

\begin{remark}
	Cf. the appendix of \cite{Klibanov2019a}, if the incident electric wave field has only one non-zero component, then the propagation of this component in a heterogeneous medium is governed equally well by a single Helmholtz equation. In other words, the Helmholtz equation may play an equal role as the Maxwell's system when the medium is no longer homogeneous as assumed above.
\end{remark}

\begin{remark}
	Mathematically, our numerical approach under investigation below can be extended easily to the  nonhomogeneous case of \eqref{helmholtz}--\eqref{boundary1}. In particular, our method can solve the following system of $u$:
	\[
	\begin{cases}
		\Delta u\left(x,y\right)+k^{2}u\left(x,y\right)=f\left(x,y\right) & \text{in }\Omega,\\
		u\left(x,0\right)=b_{0}\left(x\right),\quad u\left(x,1\right)=b_{1}\left(x\right) & \text{for }x\in\left(0,1\right),\\
		u\left(0,y\right)=u_{0}\left(y\right),\quad u_{x}\left(0,y\right)=u_{1}\left(y\right) & \text{for }y\in\left(0,1\right).
	\end{cases}
	\]
	In this context, our problem for $V$ remains the same as in \eqref{VV}. The problem for $U$ should then read as
	\begin{align*}
		\begin{cases}
			\Delta U\left(x,y\right)+k^{2}U\left(x,y\right)=f(x,y) & \text{in }\Omega,\\
			U\left(x,0\right)=b_0(x),\quad U\left(x,1\right)=b_1(x) & \text{for }x\in\left(0,1\right),\\
			U_{x}\left(0,y\right)=u_{1}\left(y\right),\quad U\left(1,y\right)=0 & \text{for }y\in\left(0,1\right),
		\end{cases}
	\end{align*}
	which is a well-posed boundary value problem.
\end{remark}

\subsection{Historical remarks and our goals}

The Cauchy problem for Helmholtz equations (as well as elliptic equations) is well-studied in the Inverse and Ill-posed Problems community. Suffering from the Hadamard instability, this problem is severely ill-posed as the degree of ill-posedness is infinite; see \cite{Karimi2022} for distinctive classes of ill-posed problems based on the degree of ill-posedness. To overcome the natural instability, there are many researches devoted to regularization of such a Cauchy problem. Those are essentially spectral-based and optimization-based methods. The existing literature on these two types of methods is huge. The spectral regularization method and its variants rely on suitable perturbation of the unbounded kernel involved in the explicit presentation of solution. The kernel can be stabilized by the perturbation of the original PDE or by the direct perturbation inside the kernel. The former perturbation may lead to the so-called PDE-based regularization method. The reader can be referred to the following fundamental works \cite{Leitao2000,Qian2008,Tuan2010a,Elden2009} and references cited therein for an overview of the spectral regularization method. The optimization-based regularization method is based on the construction of Tikhonov-like cost functionals involving the (strict) convexity; cf. e.g. \cite{Hao2000,Klibanov2015,Reinhardt1999}. The obtained minimizer is proved to approximate the true solution in a stable manner. It is necessary to mention here the works \cite{Falk1986,Elden2005}, where a Carleman weight is appropriately applied to ''convexify'' the energy functional logarithmically. Lastly, we wish to mention that the Cauchy problem posed in unbounded domains has also been considered in, e.g., \cite{Karimi2017,Qiu2008}.

Different from the above-mentioned regularization methods, we would like to study in this work a modified quasi-reversibility (QR) method, which is a PDE-based approach. The QR method was originally mentioned in the monograph by Latt\`es and Lions; see \cite{LL67} by perturbing the unbounded operator - the main cause of the instability. The modified QR method under investigation has been commenced in the pioneering work \cite{Nguyen2019}, where the authors established two operators along with their conditional estimates to guarantee the strong convergence of the scheme solving quasi-linear parabolic equations backwards in time. The key ingredient of the method is that using a suitable perturbation, the ill-posed problem turns to be a forward-like problem in which we can prove its conditional well-posedness. It is, on the other hand, certain that numerical solutions for forward problems are well-studied nowadays. Recently, this modified QR method has been applied to the Cauchy problem for the Laplace system in \cite{Khoa2020a}. In this regard, we regularize the Cauchy-Laplace problem by the corresponding initial-value hyperbolic problem. Using the same idea, in the present work, we verify the applicability of this method to solve the Cauchy problem for the Helmholtz equation. By the possible involvement of large frequencies $k$, we, however, remark that the perturbation should be chosen appropriately. Accordingly, we focus ourselves on specific perturbation and stabilized operators. 

\subsection{Organization of the paper}

The paper is organized as follows. In section \ref{sec:2}, we recall some preliminaries concerning the ill-posedness of the Cauchy problem and how we derive the modified QR scheme from the original PDE. We also specify the perturbation operator and the corresponding stabilized operator in our regularized problem. Conditional estimates of these operators are deduced accordingly. Then, we analyze the conditional well-posedness of the regularized problem and the strong convergence of the scheme in section \ref{sec:3}. The Lipschitz stability of the scheme also follows. In section \ref{sec:4}, we investigate the corresponding iterative scheme. Finally, some numerical examples are provided in section \ref{sec:num} to corroborate our theoretical analysis.

\section{Preliminaries}\label{sec:2}

Let $A$ be either a Banach space or a Hilbert space. We call $A'$ the dual space of $A$. For a certain Banach space $A$, $\left\Vert \cdot\right\Vert_{A}$ stands for the $A$-norm. When $A$ is Hilbert, we define the $A$-norm of $u$ as $\left\Vert u\right\Vert_{A}^{2} = \left\langle u,u\right\rangle_{A}$, where $\left\langle \cdot,\cdot\right\rangle_{A}$ is the corresponding inner product. Throughout the paper, we will use $\left\langle \cdot,\cdot\right\rangle $ to indicate either the scalar product in $L^2(0,1)$ or the dual pairing of a continuous linear functional and
an element of a function space. We thereby denote by $\left\Vert \cdot\right\Vert$ the $L^2(0,1)$-norm. 

The Cauchy problem of Helmholtz equation is well known to be unstable with respect to any small perturbation of the data. Based on the zero Dirichlet boundary condition \eqref{boundary},
the Laplace operator $-\partial^2/\partial y^2$ is non-negative. According to the
standard result for the Dirichlet eigenvalue problem, there exists an
orthonormal basis $\left\{ \phi_{j}\right\} $ of $L^{2}\left(0,1\right)$
such that $\phi_{j}\in H^{1}\left(0,1\right)\cap C^{\infty}\left[0,1\right]$
and $-d^2/d y^2\phi_{j}\left(y\right)=\mu_{j}\phi_{j}\left(y\right)$.
The Dirichlet eigenvalues $\mu_{j}$ in this case form an infinite sequence such
that $
0\le \mu_{0}<\mu_{1}<\mu_{2}<\ldots,\text{and }\lim_{j\to\infty}\mu_{j}=\infty
$. It follows from \eqref{helmholtz} and \eqref{boundary1} that we obtain the following initial-value differential system:
\begin{align}\label{ODE}
	\begin{cases}
		\frac{d^{2}}{dx^{2}}\left\langle u\left(x,\cdot\right),\phi_{j}\right\rangle -\lambda_{j,k}\left\langle u\left(x,\cdot\right),\phi_{j}\right\rangle =0,\\
		\left\langle u\left(0,\cdot\right),\phi_{j}\right\rangle =\left\langle u_{0},\phi_{j}\right\rangle ,\quad 
		\frac{d}{dx}\left\langle u\left(0\right),\phi_{j}\right\rangle =0 .
	\end{cases}
\end{align}

In \eqref{ODE}, $\lambda_{j,k} = \mu_j - k^2$. By this way, we solve system \eqref{ODE} in each of the following set of Fourier frequencies:
\[
A_{1}:=\left\{j\in\mathbb{N}:\lambda_{j,k}>0\right\} ,\quad A_{2}:=\left\{ j\in\mathbb{N}:\lambda_{j,k}=0\right\} ,\quad A_{3}:=\left\{ j\in\mathbb{N}:\lambda_{j,k}<0\right\} .
\]
It is also straightforward to see that $\phi_{j}(y)=\sqrt{2}\sin\left(j\pi y\right),\; \mu_{j}=j^2\pi^2$. In addition,  $\left\{\phi_{j}'/\sqrt{\mu_j}\right\}_{j\in\mathbb{N}^{*}}$ is an orthonormal basis of $L^2(0,1)$. Therefore, it holds that
\begin{align}\label{normuy}
\left\Vert u_{y}\right\Vert ^{2}=\sum_{j\in\mathbb{N}^{*}}\left|\left\langle u_{y},\frac{\phi_{j}'}{\sqrt{\mu_{j}}}\right\rangle \right|^{2}=\sum_{j\in\mathbb{N}^{*}}\left|\left\langle u,\frac{\phi_{j}''}{\sqrt{\mu_{j}}}\right\rangle \right|^{2}=\sum_{j\in\mathbb{N}^{*}}\mu_{j}\left|\left\langle u,\phi_{j}\right\rangle \right|^{2}.
\end{align}

\begin{theorem}\label{thm:instability}
	The Fourier coefficient $\left\langle u\left(x,\cdot\right),\phi_{j}\right\rangle$ has the form:
	\begin{align}\label{uuu}
	\left\langle u\left(x,\cdot\right),\phi_{j}\right\rangle =\begin{cases}
		\cosh\left(\sqrt{\lambda_{j,k}}x\right)\left\langle u_{0},\phi_{j}\right\rangle  & \text{in }A_{1},\\
		\cos\left(\sqrt{-\lambda_{j,k}}x\right)\left\langle u_{0},\phi_{j}\right\rangle  & \text{in }A_{3}.
	\end{cases}
	\end{align}
\end{theorem}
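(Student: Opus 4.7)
The plan is to recognize that the system \eqref{ODE} is, for each fixed $j$, simply a scalar second-order linear ODE with constant coefficients in the variable $x$, and to solve it explicitly by the standard characteristic-equation method, treating the two cases $j\in A_1$ and $j\in A_3$ separately according to the sign of $\lambda_{j,k}$. Denote $w_j(x) := \langle u(x,\cdot),\phi_j\rangle$; then \eqref{ODE} reads $w_j''(x) = \lambda_{j,k}\, w_j(x)$ with $w_j(0) = \langle u_0,\phi_j\rangle$ and $w_j'(0) = 0$ (using the reduction $u_1 = 0$ noted after \eqref{VV}).

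For $j\in A_1$, i.e.\ $\lambda_{j,k}>0$, the characteristic roots are $\pm\sqrt{\lambda_{j,k}}$, so the general solution is $w_j(x) = C_1 \cosh(\sqrt{\lambda_{j,k}}\,x) + C_2 \sinh(\sqrt{\lambda_{j,k}}\,x)$. Enforcing $w_j(0)=\langle u_0,\phi_j\rangle$ yields $C_1 = \langle u_0,\phi_j\rangle$, and enforcing $w_j'(0)=0$ forces $C_2 = 0$, giving the first branch of \eqref{uuu}. For $j\in A_3$, i.e.\ $\lambda_{j,k}<0$, I write $\alpha_j := \sqrt{-\lambda_{j,k}}$, so the ODE becomes $w_j'' + \alpha_j^2 w_j = 0$ with general solution $w_j(x) = C_1\cos(\alpha_j x) + C_2\sin(\alpha_j x)$. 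The same initial conditions pin down $C_1 = \langle u_0,\phi_j\rangle$ and $C_2 = 0$, yielding the second branch.

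Since both derivations are routine constant-coefficient computations, I do not anticipate any genuine obstacle; the only thing to be careful about is the justification that the Fourier projection of \eqref{helmholtz} really produces \eqref{ODE} in the strong (pointwise in $x$) sense, which follows because $\phi_j\in C^\infty[0,1]$ and one may take $\partial_x^2$ under the inner product $\langle \cdot,\phi_j\rangle$, replacing $\partial_y^2\phi_j$ by $-\mu_j\phi_j$ via integration by parts against the zero Dirichlet data \eqref{boundary}. The case $j\in A_2$ is omitted from the statement since $\lambda_{j,k}=0$ can occur only for at most one index (as $\{\mu_j\}$ is strictly increasing), and there $w_j(x)\equiv \langle u_0,\phi_j\rangle$ by the same argument; the theorem as stated covers the two generic regimes responsible for the well-known instability through the $\cosh$ amplification on $A_1$.
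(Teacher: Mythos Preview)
Your proposal is correct and is precisely the standard argument the paper defers to via its citation of \cite{Tuan2017}: reduce to the scalar second-order constant-coefficient ODE \eqref{ODE} for $w_j(x)=\langle u(x,\cdot),\phi_j\rangle$, solve by characteristic roots, and fix the constants from the initial data $w_j(0)=\langle u_0,\phi_j\rangle$, $w_j'(0)=0$. Your remarks on the justification of \eqref{ODE} and on the degenerate set $A_2$ are accurate but go slightly beyond what the paper records.
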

\begin{proof}
	Proof of the theorem can be proceeded as in \cite{Tuan2017}. In $A_1$, solving system \eqref{ODE} gives
	\begin{align}\label{uu}
		\left\langle u\left(x,\cdot\right),\phi_{j}\right\rangle =C_{1}e^{x\sqrt{\lambda_{j,k}}}+C_{2}e^{-x\sqrt{\lambda_{j,k}}}.
	\end{align}
	Therefore, when $x=0$, it yields
	\begin{align}\label{aa}
		\left\langle u_{0},\phi_{j}\right\rangle =\left\langle u\left(0,\cdot\right),\phi_{j}\right\rangle =C_{1}+C_{2}.
	\end{align}
	On the other hand, we compute that
	\[
	\frac{d}{dx}\left\langle u\left(x,\cdot\right),\phi_{j}\right\rangle =\sqrt{\lambda_{j,k}}\left(C_{1}e^{x\sqrt{\lambda_{j,k}}}-C_{2}e^{-x\sqrt{\lambda_{j,k}}}\right).
	\]
	When $x=0$, we arrive at
	\begin{align}\label{bb}
		0 =\frac{d}{dx}\left\langle u\left(0,\cdot\right),\phi_{j}\right\rangle =\sqrt{\lambda_{j,k}}\left(C_{1}-C_{2}\right)
	\end{align}
	Combining \eqref{aa} and \eqref{bb}, we have
	$
	C_{1}=C_{2}=\frac{1}{2}\left\langle u_{0},\phi_{j}\right\rangle.
	$
	By back-substitution of these $C_1$ and $C_2$ into  \eqref{uu}, the Fourier coefficient of $u$ is formulated by
	\begin{align}\label{uuu}
		\left\langle u\left(x,\cdot\right),\phi_{j}\right\rangle =\cosh\left(\sqrt{\lambda_{j,k}}x\right)\left\langle u_{0},\phi_{j}\right\rangle .
	\end{align}
	In $A_3$, we do the same way and obtain
	$
	\left\langle u\left(x,\cdot\right),\phi_{j}\right\rangle =\cos\left(\sqrt{-\lambda_{j,k}}x\right)\left\langle u_{0},\phi_{j}\right\rangle
	$. This completes the proof of the theorem.
\end{proof}

Now, we show a very important relation of these Fourier frequencies in the following theorem.

\begin{theorem} \label{thm:relation}
	Taking into account set $A_1$, the Fourier coefficient of $u$ satisfies the following relation:
	\begin{align}\label{relation}
		\lambda_{j,k}e^{\left(1-x\right)\sqrt{\lambda_{j,k}}}\left(\left\langle u\left(x,\cdot\right),\phi_{j}\right\rangle +\frac{1}{\sqrt{\lambda_{j,k}}}\left\langle u_{x}\left(x,\cdot\right),\phi_{j}\right\rangle \right)=\lambda_{j,k}\left\langle u\left(1,\cdot\right),\phi_{j}\right\rangle +\sqrt{\lambda_{j,k}}\left\langle u_{x}\left(1,\cdot\right),\phi_{j}\right\rangle.
	\end{align}
\end{theorem}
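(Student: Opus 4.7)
The plan is to verify \eqref{relation} by straightforward substitution of the closed-form Fourier coefficient supplied by Theorem \ref{thm:instability}, so the argument is essentially an algebraic identity rather than a new ODE/PDE estimate.

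First I would introduce the abbreviations $\alpha := \sqrt{\lambda_{j,k}}$ and $c_{0} := \langle u_{0},\phi_{j}\rangle$, which are legitimate on $A_{1}$ because $\lambda_{j,k}>0$ there. From Theorem \ref{thm:instability} we have $\langle u(x,\cdot),\phi_{j}\rangle = c_{0}\cosh(\alpha x)$. Since the right-hand side is a smooth function of $x$, differentiation in $x$ is classical and yields $\langle u_{x}(x,\cdot),\phi_{j}\rangle = c_{0}\,\alpha\sinh(\alpha x)$. (This is also consistent with the ODE in \eqref{ODE}.)

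Next I would plug these formulas into the left-hand side of \eqref{relation}. The parenthesized expression simplifies to
\[
c_{0}\cosh(\alpha x) + \tfrac{1}{\alpha}\bigl(c_{0}\alpha\sinh(\alpha x)\bigr) = c_{0}\bigl(\cosh(\alpha x)+\sinh(\alpha x)\bigr) = c_{0}\,e^{\alpha x},
\]
so multiplying by $\alpha^{2}e^{(1-x)\alpha}$ produces the telescoped quantity $\alpha^{2} c_{0}\,e^{\alpha}$, in which the $x$-dependence cancels completely. Evaluating the right-hand side of \eqref{relation} at $x=1$ in the same way gives $\alpha^{2} c_{0}\cosh(\alpha) + \alpha\cdot c_{0}\alpha\sinh(\alpha) = \alpha^{2} c_{0}(\cosh(\alpha)+\sinh(\alpha)) = \alpha^{2} c_{0}\,e^{\alpha}$, matching the left-hand side.

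There is no real obstacle here; the only mild subtlety is noticing that the cancellation of the $x$-dependence is exactly the identity $\cosh z + \sinh z = e^{z}$, which is what makes the relation useful later (it lets one express the Fourier mode at an arbitrary $x$ in terms of the Cauchy data at $x=1$). I would end the proof by remarking that this $x$-independence of both sides is precisely the structural feature that will drive the subsequent stabilization analysis.
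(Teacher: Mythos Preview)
Your proposal is correct and follows essentially the same approach as the paper: compute $\langle u_{x}(x,\cdot),\phi_{j}\rangle$ from the closed form in Theorem~\ref{thm:instability}, then substitute into both sides and use $\cosh z+\sinh z=e^{z}$. The paper's own proof is terser (it just states the derivative formula and says ``combine the resulting formulations''), but your explicit algebra is exactly what is being suppressed there.
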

\begin{proof}
	From \eqref{uuu}, we can compute that
	\begin{align}\label{ux}
		\frac{1}{\sqrt{\lambda_{j,k}}}\left\langle u_{x}\left(x,\cdot\right),\phi_{j}\right\rangle =\sinh\left(\sqrt{\lambda_{j,k}}x\right)\left\langle u_{0},\phi_{j}\right\rangle .
	\end{align}
	Thus, we take $x=1$ in \eqref{ux} and in \eqref{uuu} and then combine the resulting formulations to obtain \eqref{relation}. We complete the proof of the theorem. 
\end{proof}

Practically, the data $u_0$ in \eqref{boundary1} always contain noise of measurement. Therefore, we assume to have $u_0^{\varepsilon}\in H^1(0,1)$ as the noisy data such that for $\varepsilon \in (0,1)$,
\begin{align}\label{noisecondition}
	\left\Vert u_{0}^{\varepsilon}-u_{0}\right\Vert _{H^1\left(0,1\right)}\le\varepsilon.
\end{align}
By Theorem \ref{thm:instability}, our Cauchy problem is exponentially unstable in $A_1$ due to the natural growth of the hyperbolic cosine function. Any small perturbation of the initial data $u_0$ may cause a huge error when computing solution $u$ of the Cauchy problem. In this work, we then adapt our recent modified quasi-reversibility method (cf. \cite{Khoa2020a} for elliptic operators and \cite{Nguyen2019} for parabolic operators) to solve our system \eqref{helmholtz}--\eqref{boundary1}. To do so, we rewrite (\ref{helmholtz}) as
\begin{equation}
	u_{xx}-u_{yy}+2u_{yy} + 2k^2 u= k^2 u.\label{eq:4}
\end{equation}
We then perturb (\ref{eq:4}) by a linear mapping $\mathbf{Q}$
and take $\mathbf{P}=\mathbf{Q}+2\partial^2/\partial y^2 + 2k^2$.
Henceforth, we arrive at
\begin{align}\label{regu1}
	u_{xx}-u_{yy}+\mathbf{P}u = k^2 u\quad \text{in }\Omega.
\end{align}
It is worth mentioning that together with the boundary condition \eqref{boundary} and the Cauchy data \eqref{boundary1} with measurement  $u_0^{\varepsilon}$,  \eqref{regu1} forms a system of linear wave equation. Herewith, $x$ becomes a parametric time variable. Since the noise level $\varepsilon$ is involved, we then seek a sequence of $\left\{u^{\varepsilon}\right\}_{\varepsilon>0}$ satisfying the following system:
\begin{align}\label{reguHelm}
	\begin{cases}
		u_{xx}^{\varepsilon}-u_{yy}^{\varepsilon}+\mathbf{P}u^{\varepsilon} =k^2 u^{\varepsilon} & \text{in }\Omega,\\
		u^{\varepsilon}\left(x,0\right)=u^{\varepsilon}\left(x,1\right)=0 & \text{for }x\in\left(0,1\right),\\
		u^{\varepsilon}\left(0,y\right)=u_{0}^{\varepsilon}\left(y\right),\quad u_{x}^{\varepsilon}\left(0,y\right)=0 & \text{for }y\in\left(0,1\right).
	\end{cases}
\end{align}
Cf. \cite{Khoa2020a,Nguyen2019}, $\mathbf{Q}$ is called \textit{perturbation} as it is to ``absorb'' high Fourier frequencies in the Laplace operator, and $\mathbf{P}$ is called \textit{stabilized operator} as it only contains large enough Fourier frequencies serving for the convergence of the scheme. Let $\gamma>1$. Consider $B:=\left\{ j\in A_1:\lambda_{j,k}>\log^2(\gamma) \right\}$. We choose the following truncation operator:
\begin{align}\label{Q}
	\mathbf{Q}u(x,\cdot)=2\sum_{j\in  B}\lambda_{j,k}\left\langle u(x,\cdot),\phi_{j}\right\rangle \phi_{j}  +2\sum_{j\in A_3}\lambda_{j,k}\left\langle u\left(x,\cdot\right),\phi_{j}\right\rangle \phi_{j}:= \mathbf{Q_1}u(x,\cdot) + \mathbf{Q_2}u(x,\cdot).
\end{align}
As to the corresponding stabilized operator $\mathbf{P}$, we find that
\begin{align*}
	\mathbf{P}u(x,\cdot) & =2\sum_{j\in B\cup A_3}\lambda_{j,k}\left\langle u(x,\cdot),\phi_{j}\right\rangle \phi_{j}-2\sum_{j\in\mathbb{N}}\mu_{j}\left\langle u(x,\cdot),\phi_{j}\right\rangle \phi_{j} + 2k^{2}\sum_{j\in\mathbb{N}}\left\langle u(x,\cdot),\phi_{j}\right\rangle \phi_{j}\\
	& =2\sum_{j\in B\cup A_3}\left(\lambda_{j,k}-\mu_{j}\right)\left\langle u(x,\cdot),\phi_{j}\right\rangle \phi_{j}-2\sum_{j\in\mathbb{N}\backslash (B\cup A_3)}\mu_{j}\left\langle  u(x,\cdot),\phi_{j}\right\rangle \phi_{j} + 2k^{2}\sum_{j\in\mathbb{N}}\left\langle u(x,\cdot),\phi_{j}\right\rangle \phi_{j}\\
	& =-2\sum_{j\in\mathbb{N}\backslash (B\cup A_3)}\lambda_{j,k}\left\langle u(x,\cdot),\phi_{j}\right\rangle \phi_{j}.
\end{align*}

In view of Parseval's identity, we now estimate that
\begin{align}
	\left\Vert \mathbf{Q}_1u(x,\cdot)\right\Vert ^{2}  =4\sum_{j\in B}e^{-2\sqrt{\lambda_{j,k}}}\lambda_{j,k}^{2}e^{2\sqrt{\lambda_{j,k}}}\left|\left\langle u(x,\cdot),\phi_{j}\right\rangle \right|^{2} \le4\gamma^{-2}\sum_{j\in B}\lambda_{j,k}^{2}e^{2\sqrt{\lambda_{j,k}}}\left|\left\langle u(x,\cdot),\phi_{j}\right\rangle \right|^{2}.\label{estQ}
\end{align}
By using \eqref{relation} obtained in Theorem \ref{thm:relation}, we have
\begin{align*}
	&\sup_{x\in\left[0,1\right]}\sum_{j\in B}\lambda_{j,k}^{2}e^{2\sqrt{\lambda_{j,k}}}\left|\left\langle u(x,\cdot),\phi_{j}\right\rangle \right|^{2}\\
	& \le\sup_{x\in\left[0,1\right]}\left[\sum_{j\in B}\lambda_{j,k}^{2}e^{2\left(1-x\right)\sqrt{\lambda_{j,k}}}\left(\left\langle u\left(x,\cdot\right),\phi_{j}\right\rangle +\frac{1}{\sqrt{\lambda_{j,k}}}\left\langle u_{x}\left(x,\cdot\right),\phi_{j}\right\rangle \right)^{2}\right]\\
	& \le\sum_{j\in B}\left(\lambda_{j,k}\left\langle u\left(1,\cdot\right),\phi_{j}\right\rangle +\sqrt{\lambda_{j,k}}\left\langle u_{x}\left(1,\cdot\right),\phi_{j}\right\rangle \right)^{2} \le 2\left\Vert u\left(1,\cdot\right)\right\Vert _{H^{2}\left(0,1\right)}^{2}+2\left\Vert u_{x}\left(1,\cdot\right)\right\Vert _{H^{1}\left(0,1\right)}^{2}.
\end{align*}
by means of $\left\langle u\left(x,\cdot\right),\phi_{j}\right\rangle \left\langle u_{x}\left(x,\cdot\right),\phi_{j}\right\rangle  \ge 0$; cf. \eqref{uuu} and \eqref{ux}. Now we estimate $\mathbf{Q}_2u$ as follows. Observe that if $\log\left(\gamma\right)\ge k$, then $\mu_{i}-k^{2}\ge k^{2}-\mu_{j}>0$ for $i\in B$ and $j\in A_{3}$. This means that $\lambda_{i,k}\ge\left|\lambda_{j,k}\right|$ for $i\in B$ and $j\in A_{3}$. Therefore, we estimate that
\begin{align}\label{Q2}
\left\Vert \mathbf{Q}_2u(x,\cdot)\right\Vert ^{2} = 4\sum_{j\in A_3}\left|\lambda_{j,k}\right|^{2}\left|\left\langle u\left(x,\cdot\right),\phi_{j}\right\rangle \right|^{2}
\le 4\sum_{j\in B}\left|\lambda_{j,k}\right|^{2}\left|\left\langle u\left(x,\cdot\right),\phi_{j}\right\rangle \right|^{2} = \left\Vert \mathbf{Q}_1u(x,\cdot)\right\Vert ^{2}.
\end{align}
Henceforth, we can assume that the true solution satisfies $u(1,\cdot)\in H^2(0,1)$ and $u_x(1,\cdot)\in H^1(0,1)$ to gain the strong convergence of the scheme. Note now that $\mathbf{P}$ is computable, which is relevant to our numerical simulation, compared to many other modified kernel regularization methods. Moreover, since in $\mathbb{N}\backslash (B\cup A_3)$ it holds that $0\le \lambda_{j,k}\le \log^2(\gamma)$, we, according to Parseval's identity and using \eqref{normuy}, get that
\begin{align}\label{estP}
	\left\Vert \mathbf{P}u(x,\cdot)\right\Vert ^{2}\le 4\log^2(\gamma)\left\Vert u_y(x,\cdot)\right\Vert ^{2}.
\end{align}
\begin{remark}
	In section \ref{sec:3} below, we will prove that the approximate solution $u^{\varepsilon}$ approaches $u$ under an appropriate choice of $\gamma$ dependent of the noise level $\varepsilon$. Complying with that, we below denote our operators by $\mathbf{Q}_{\varepsilon}$ and $\mathbf{P}_{\varepsilon}$ in lieu of, as above, $\mathbf{Q}$ and $\mathbf{P}$, respectively.
\end{remark}

\section{Analysis of the regularization scheme}\label{sec:3}

We now formulate theorems for the weak solvability of system \eqref{reguHelm} and convergence analysis of the corresponding regularization scheme. When doing so, we provide the definition of weak solution as follows.

\begin{definition}[Weak solution]\label{def:weak1}
	For each $\varepsilon>0$, a function $u^{\varepsilon}:[0,1]\to H^1_0(0,1)$ is said to be a weak solution to system \eqref{reguHelm} if
	\begin{itemize}
		\item $u^{\varepsilon}\in C([0,1];H^1_0(0,1)), \partial_{x}u^{\varepsilon}\in C([0,1];L^2(0,1)), \partial^2_{x^2}u^{\varepsilon}\in L^2(0,1;(H^1(0,1))')$;
		\item For every test function $\psi\in H_0^1(0,1)$, it holds that
		\begin{align}
			\left\langle \frac{\partial^{2}u^{\varepsilon}}{\partial x^{2}},\psi\right\rangle +\left\langle \frac{\partial u^{\varepsilon}}{\partial y},\frac{\partial\psi}{\partial y}\right\rangle +\left\langle \mathbf{P}_{\varepsilon}u^{\varepsilon},\psi\right\rangle = k^{2}\left\langle u^{\varepsilon},\psi\right\rangle \quad \text{for a.e. in }(0,1);
		\end{align}
		\item $u^{\varepsilon}(0)=u_0^{\varepsilon}\in H^1(0,1), \partial_{x}u^{\varepsilon}(0)=0$.
	\end{itemize}
\end{definition}

\begin{theorem}[Existence and uniqueness of a weak regularized solution]\label{thm:weak}
	For each $\varepsilon>0$, system \eqref{reguHelm} admits a unique weak solution in the sense of Definition \ref{def:weak1}. Moreover, it holds that $u^{\varepsilon}\in C([0,1];H_0^1(0,1))$ and $\partial_{x}u^{\varepsilon} \in C([0,1];L^2(0,1))$.
\end{theorem}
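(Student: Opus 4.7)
The plan is to construct the weak solution explicitly via the Fourier expansion in the eigenbasis $\{\phi_j\}$, exploiting the crucial fact that the stabilized operator $\mathbf{P}$ is diagonal in this basis, so the hyperbolic PDE in \eqref{reguHelm} decouples into an uncoupled family of scalar second-order linear ODEs. This bypasses a full Faedo--Galerkin scheme and delivers the solution in closed form.

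First, I would set $\beta_j(x):=\langle u^{\varepsilon}(x,\cdot),\phi_j\rangle$ and project \eqref{reguHelm} onto $\phi_j$ using the representation \eqref{PPP}. A direct calculation gives
\begin{align*}
\beta_j''(x)+\lambda_{j,k}\beta_j(x)=0,\quad j\in B\cup A_3,\qquad \beta_j''(x)-\lambda_{j,k}\beta_j(x)=0,\quad j\in\mathbb{N}\setminus(B\cup A_3),
\end{align*}
with $\beta_j(0)=\langle u_0^{\varepsilon},\phi_j\rangle$ and $\beta_j'(0)=0$ in both cases. Solving these explicitly produces cosines for $j\in B$ (bounded), hyperbolic cosines with argument $\sqrt{-\lambda_{j,k}}\,x$ for $j\in A_3$ (bounded, since $A_3$ is finite and $|\lambda_{j,k}|<k^{2}$ there), and hyperbolic cosines with argument $\sqrt{\lambda_{j,k}}\,x$ for $j\in\mathbb{N}\setminus(B\cup A_3)$ (bounded because $\lambda_{j,k}\le\log^{2}(\gamma)$ on this set). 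Hence there exists $C=C(\gamma,k)>0$ such that
\[
|\beta_j(x)|\le C\,|\langle u_0^{\varepsilon},\phi_j\rangle|\qquad\text{and}\qquad |\beta_j'(x)|\le C\sqrt{\mu_j}\,|\langle u_0^{\varepsilon},\phi_j\rangle|
\]
uniformly in $x\in[0,1]$.

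Next, I would define $u^{\varepsilon}(x,y):=\sum_{j\in\mathbb{N}^{*}}\beta_j(x)\phi_j(y)$ and verify the three regularity requirements of Definition~\ref{def:weak1} using Parseval's identity and the $H^{1}$-norm formula \eqref{normuy}: the pointwise bounds above, together with $u_0^{\varepsilon}\in H^{1}(0,1)$, place $u^{\varepsilon}$ in $C([0,1];H_0^{1}(0,1))$ and $\partial_x u^{\varepsilon}$ in $C([0,1];L^{2}(0,1))$, while reading off $\partial_x^{2}u^{\varepsilon}=\partial_y^{2}u^{\varepsilon}-\mathbf{P}u^{\varepsilon}-k^{2}u^{\varepsilon}$ and invoking \eqref{estP} puts $\partial_x^{2}u^{\varepsilon}$ in $L^{2}(0,1;(H^{1}(0,1))')$. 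The variational identity in Definition~\ref{def:weak1} is then checked term by term against any $\psi\in H_0^{1}(0,1)$ and closed by dominated convergence, while the two initial conditions hold by construction of the $\beta_j$.

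For uniqueness, I would apply the same Fourier projection to the difference $w:=u_1^{\varepsilon}-u_2^{\varepsilon}$ of two weak solutions sharing identical Cauchy data: each $\langle w(x,\cdot),\phi_j\rangle$ obeys the same scalar homogeneous linear ODE with zero Cauchy data and hence vanishes identically, so $w\equiv 0$. The only genuine obstacle is bookkeeping the sign of $\lambda_{j,k}$ across the three regimes $A_1\setminus B$, $A_3$, and $B$ simultaneously; but this is precisely the effect of the frequency-dependent truncation, which permits hyperbolic-cosine growth only where $\sqrt{|\lambda_{j,k}|}$ is controlled by a function of $k$ and $\log(\gamma)$, so convergence of the series is automatic and no further compactness argument is required beyond termwise passage to the limit.
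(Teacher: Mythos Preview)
The paper does not actually supply a proof of this theorem; it states that existence and uniqueness ``can be proven similarly to those theoretical findings in \cite{Khoa2020a}'' and omits all details. Your explicit Fourier-series construction is correct and, in this particular setting, more transparent than deferring to an external reference. Because $\mathbf{P}$ is diagonal in the eigenbasis $\{\phi_j\}$, the projected ODEs are exactly as you derived, and your mode-by-mode bounds $|\beta_j(x)|\le C|\langle u_0^{\varepsilon},\phi_j\rangle|$ and $|\beta_j'(x)|\le C\sqrt{\mu_j}\,|\langle u_0^{\varepsilon},\phi_j\rangle|$ (with $C=C(k,\gamma)$) immediately yield the required $C([0,1];H_0^1)$ and $C([0,1];L^2)$ regularity once $u_0^{\varepsilon}\in H_0^1(0,1)$ --- a compatibility implicitly forced by Definition~\ref{def:weak1}, since $u^{\varepsilon}(0,\cdot)$ must lie in $H_0^1(0,1)$. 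The uniqueness argument via testing against $\psi=\phi_j$ is likewise sound: the weak formulation shows that each $\langle w(x,\cdot),\phi_j\rangle$ satisfies a homogeneous second-order linear ODE with zero Cauchy data, hence vanishes.

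The difference, to the extent one can infer the intended proof, is that a Faedo--Galerkin or abstract energy argument (the standard route for linear hyperbolic problems, and presumably what \cite{Khoa2020a} does) would apply to any perturbation $\mathbf{P}$ obeying an estimate like \eqref{estP}, whereas your argument exploits the specific diagonal structure of the truncation \eqref{PPP} and would not transfer to a non-diagonal $\mathbf{P}$. In exchange you obtain the regularized solution in closed form, which is a genuine bonus here and dovetails nicely with the explicit computations in Theorems~\ref{thm:instability} and~\ref{thm:relation}.
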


\begin{proof}
	To prove this theorem, we employ the standard Galerkin approximation. Consider the $n$-dimensional of $H^1_0(0,1)$ generated by $\phi_0,\phi_1,\ldots,\phi_n$. For each $n\in\mathbb{N}$, we take into account the following Galerkin projection for approximation of \eqref{reguHelm}:
	\begin{align}\label{Galerkinprojec}
		u_{n}^{\varepsilon}(x,y) = \sum_{j=0}^nU_{jn}^{\varepsilon}(x)\phi_j(y).
	\end{align}
	This function $u_{n}^{\varepsilon}$ is hereby the solution of the following approximate equation:
	\begin{align}\label{Galerkinequa}
		\left\langle \frac{\partial^{2}u_{n}^{\varepsilon}}{\partial x^{2}},\psi\right\rangle +\left\langle \frac{\partial u_{n}^{\varepsilon}}{\partial y},\frac{\partial\psi}{\partial y}\right\rangle +\left\langle \mathbf{P}_{\varepsilon}u_{n}^{\varepsilon},\psi\right\rangle = k^{2}\left\langle u_{n}^{\varepsilon},\psi\right\rangle \quad \text{for } \psi\in \mathbb{S}_n\text{ and a.e. in }(0,1).
	\end{align}
	This Galerkin equation is endowed with the initial data $\partial_xu_{n}^{\varepsilon}(0,y) = 0$ and
	\begin{align}\label{stronginH1}
		u_{n}^{\varepsilon}(0,y) = \sum_{j=0}^n\left(U_0^{\varepsilon}\right)_{jn}\phi_j(y) \xrightarrow{\text{strongly in $H^1(0,1)$}} u_0^{\varepsilon} \text{ as } n\to\infty.
	\end{align}
	Now, let $\psi = \phi_j$, where recall that $\left\{\phi_{j}\right\}$ is the orthonormal basis of $L^2(0,1)$. Then functions $U_{jn}^{\varepsilon}$ are solutions to the Cauchy problem for the system of $n$ vectorial ordinary
	differential equations:
	\begin{align}\label{vecsystem}
		\begin{cases}
			\displaystyle{\dfrac{d^2}{dx^2}U_{jn}^{\varepsilon}+(\mu_j-k^2)U_{jn}^{\varepsilon}+\sum_{i=0}^nU_{in}^{\varepsilon}\left\langle\textbf{P}^{\varepsilon}\phi_i,\phi_j\right\rangle = 0},\\
			U_{jn}^{\varepsilon}(0) = \left(U_0^{\varepsilon}\right)_{jn},\dfrac{d}{dx}U_{jn}^{\varepsilon}(0) = 0.
		\end{cases}
	\end{align}
	For any $n\in\mathbb{N}$, we put $Z_{jn}^{\varepsilon}=d U_{jn}^{\varepsilon}/dx$. It then deduces from \eqref{vecsystem} that
	\begin{align*}
		\dfrac{d}{dx}\begin{bmatrix}
			U_{jn}^{\varepsilon}\\
			Z_{jn}^{\varepsilon}
		\end{bmatrix} = \begin{bmatrix}
			0 & 1\\
			k^2-\mu_j & 0
		\end{bmatrix}\begin{bmatrix}
			U_{jn}^{\varepsilon}\\
			Z_{jn}^{\varepsilon}
		\end{bmatrix}+\begin{bmatrix}
			0\\
			-\sum_{i=0}^nU_{in}^{\varepsilon}\left\langle\textbf{P}_{\varepsilon}\phi_i,\phi_j\right\rangle 
		\end{bmatrix},\quad\begin{bmatrix}
			U_{jn}^{\varepsilon}(0)\\
			Z_{jn}^{\varepsilon}(0)
		\end{bmatrix} = \begin{bmatrix}
			\left(U_{jn}^{\varepsilon}\right)_{jn}\\
			0
		\end{bmatrix}.
	\end{align*}
	Let $z_{jn}^{\varepsilon} = [U_{jn}^{\varepsilon}, Z_{jn}^{\varepsilon}]^T$. Then solving the above closed-form initial-value differential problem, we obtain the following integral equation:
	\begin{align}\label{equaintervec}
		z_{jn}^{\varepsilon}(x) = z_{jn}^{\varepsilon}(0)+A_j^k\int_0^xz_{jn}^{\varepsilon}(s)ds+\int_0^xF_j(z^{\varepsilon})(s)ds.
	\end{align}
	In \eqref{equaintervec}, we denote by
	\begin{align*}
		A_j^k = \begin{bmatrix}
			0 & 1\\
			k^2-\mu_j & 0
		\end{bmatrix}, \quad F_j(z^{\varepsilon}) = \begin{bmatrix}
			0\\
			-\sum_{i=0}^nU_{in}^{\varepsilon}\left\langle\textbf{P}_{\varepsilon}\phi_i,\phi_j\right\rangle 
		\end{bmatrix}.
	\end{align*}
	We now define $z^{\varepsilon} = \left[z_{0n}^{\varepsilon},z_{1n}^{\varepsilon},\ldots,z_{nn}^{\varepsilon}\right]\in\mathbb{R}^{2(n+1)}$ and denote $H_j[z^{\varepsilon}]$ by the right-hand side of \eqref{equaintervec}. This results in the fixed-point form $z^{\varepsilon}(x) = H[z^{\varepsilon}](x)$ where $H[z^{\varepsilon}] = \left[H_0[z^{\varepsilon}],H_1[z^{\varepsilon}],\ldots,H_n[z^{\varepsilon}]\right]$. Define the norm of $Y = C\left([0,1];\mathbb{R}^{2(n+1)}\right)$ as
	\begin{align}\label{definednorm}
		\rVert{c}\rVert_Y = \sup_{x\in[0,1]}\sum_{j=0}^n|c_j(x)|,\quad c(x) = [c_0(x),c_1(x),\ldots,c_n(x)]\in \mathbb{R}^{2(n+1)}.
	\end{align}
	We claim that there exists $m_0\in\mathbb{N}^{*}$ such that the operator $H^{m_0}:=H[H^{m_0-1}]:Y\to Y$ is a contraction mapping. Indeed, by induction we can prove that 
	\begin{align}\label{induction}
		\left|H_j^m[z_1^{\varepsilon}](x)-H_j^m[z_2^{\varepsilon}](x)\right|\leq \left[\sqrt{1+(k^2-\mu_j)^2}+2\log\left(\gamma\right)\right]^{m}\dfrac{x^m}{m!}\left\rVert{z_1^{\varepsilon}-z_2^{\varepsilon}}\right\rVert_Y
	\end{align}
	for $m\in\mathbb{N}^{*}$ and for any $z_1^{\varepsilon},z_2^{\varepsilon}\in Y$. Observe that the inductive hypothesis is true when $m = 1$. In particular, in view of the fact that
	\[
	\left\langle \mathbf{P}_{\varepsilon}\phi_{i},\phi_{j}\right\rangle =\begin{cases}
		-2\lambda_{i,k} & \text{if }i=j\in\mathbb{N}\backslash\left(B\cup A_{3}\right),\\
		0 & \text{elsewhere},
	\end{cases}
	\]
	we can estimate that
	\begin{align}
		\left|H_j[z_1^{\varepsilon}](x)-H_j[z_2^{\varepsilon}](x)\right|&\leq\int_0^x\bigg(\sqrt{1+(k^2-\mu_j)^2}\big|z_{1j}^{\varepsilon}(s)-z_{2j}^{\varepsilon}(s)\big|+\sum_{i=0}^n\big|U_{1i}^{\varepsilon}-U_{2i}^{\varepsilon}\big|\big|\left\langle\textbf{P}_{\varepsilon}\phi_i,\phi_j\right\rangle\big|\bigg)ds \nonumber\\
		&\leq\int_0^x\bigg(\sqrt{1+(k^2-\mu_j)^2}\big|z_{1j}^{\varepsilon}(s)-z_{2j}^{\varepsilon}(s)\big|+\sum_{i=0}^n 2\left|\lambda_{i,k}\right|\big|U_{1j}^{\varepsilon}-U_{2j}^{\varepsilon}\big|\bigg)ds \nonumber \\
		&\leq \bigg[\sqrt{1+(k^2-\mu_j)^2}+2\log\left(\gamma\right)\bigg]x\rVert{z_1^{\varepsilon}-z_2^{\varepsilon}}\rVert_Y. \label{m=1}
	\end{align}
	For $m= m_0 > 1$, we assume that
	\begin{align*}
		\left|H_j^{m_0}[z_1^{\varepsilon}](x)-H_j^{m_0}[z_2^{\varepsilon}](x)\right|\leq \bigg[\sqrt{1+(k^2-\mu_j)^2}+2\log\left(\gamma\right)\bigg]^{m}\dfrac{x^{m_0}}{m_0!}\rVert{z_1^{\varepsilon}-z_2^{\varepsilon}}\rVert_Y.
	\end{align*}
	We then want to prove that \eqref{induction} also holds true for $m=m_0 + 1$. Using the same token as in \eqref{m=1}, we estimate that
	\begin{align*}
		\left|H_j^{m_0+1}[z_1^{\varepsilon}](x)-H_j^{m_0+1}[z_2^{\varepsilon}](x)\right|&\leq\int_0^x\bigg[\sqrt{1+(k^2-\mu_j)^2}+2\log\left(\gamma\right)\bigg]\big|H_j^{m_0}[z_1^{\varepsilon}](s)-H_j^{m_0}[z_2^{\varepsilon}](s)\big|ds\\
		&\leq\int_0^x\bigg[\sqrt{1+(k^2-\mu_j)^2}+2\log\left(\gamma\right)\bigg]^{m_0+1}\dfrac{s^{m_0}}{m_0!}\rVert{z_1^{\varepsilon}-z_2^{\varepsilon}}\rVert_Yds\\
		&= \bigg[\sqrt{1+(k^2-\mu_j)^2}+2\log\left(\gamma\right)\bigg]^{m_0+1}\dfrac{x^{m_0+1}}{(m_0+1)!}\rVert{z_1^{\varepsilon}-z_2^{\varepsilon}}\rVert_Y
	\end{align*}
	Henceforth, our inductive hypothesis \eqref{induction} is true for any $m\in\mathbb{N}$. It also leads to the following estimate in the norm of $Y$:
	\begin{align*}
		\big\rVert{H^m[z_1^{\varepsilon}]-H^m[z_2^{\varepsilon}]}\big\rVert_Y\leq \sum_{j=0}^n\bigg[\sqrt{1+(k^2-\mu_j)^2}+2\log\left(\gamma\right)\bigg]^{m}\dfrac{x^{m}}{m!}\rVert{z_1^{\varepsilon}-z_2^{\varepsilon}}\rVert_Y.
	\end{align*}
	Since the following limit holds true
	\begin{align*}
		\lim_{m\to\infty}\sum_{j=0}^n\bigg[\sqrt{1+(k^2-\mu_j)^2}+2\log\left(\gamma\right)\bigg]^{m}\dfrac{x^{m}}{m!} = 0
	\end{align*}
	we then can find $m_0\in\mathbb{N}$ sufficiently large such that
	\begin{align*}
		\sum_{j=0}^n\bigg[\sqrt{1+(k^2-\mu_j)^2}+2\log\left(\gamma\right)\bigg]^{m_0}\dfrac{x^{m_0}}{m_0!}<1.
	\end{align*}
	This clearly indicates the existence of a constant $K\in[0,1)$ satisfies $\big\rVert{H^{m_0}[z_1^{\varepsilon}]-H^{m_0}[z_2^{\varepsilon}]}\big\rVert_Y\leq K\rVert{z_1^{\varepsilon}-z_2^{\varepsilon}}\rVert_Y$. In other words, $H^{m_0}$ is a contraction mapping from $Y$ onto itself. By the Banach fixed-point theorem, there exists a unique $z^{\varepsilon}\in Y$ such that $H^{m_0}[z^{\varepsilon}] = z^{\varepsilon}$. As $H^{m_0}\left[H[z^{\varepsilon}]\right] = H\left[H^{m_0}[z^{\varepsilon}]\right] = H[z^{\varepsilon}]$, the integral equation $z^{\varepsilon} = H[z^{\varepsilon}]$ admits a unique solution in $Y$. Hence, this results in the existence and uniqueness of $U^{\varepsilon}_{jn}\in C^1([0,1])$ solutions to system \eqref{vecsystem} for any fixed $n\in\mathbb{N}$.
	
	By $U^{\varepsilon}_{jn}\in C^1([0,1])$, we have $\partial_x u_{n}^{\varepsilon}\in C([0,1];\mathbb{S}_{n})$. Multiply both sides of \eqref{Galerkinequa} by $e^{-rx}$ and then put $v^{\varepsilon}_n = e^{-rx} u_{n}^{\varepsilon}$. Therefore, we obtain the Galerkin equation for $v^{\varepsilon}_{n}$ as follows:
	\begin{align}\label{Galerkinequav}
		\left\langle \frac{\partial^{2}v_{n}^{\varepsilon}}{\partial x^{2}},\psi\right\rangle +\left\langle \frac{\partial v_{n}^{\varepsilon}}{\partial y},\frac{\partial\psi}{\partial y}\right\rangle + 2r\left\langle \frac{\partial v_n^{\varepsilon}}{\partial x},\psi  \right\rangle + \left\langle \mathbf{P}_{\varepsilon}v_{n}^{\varepsilon},\psi\right\rangle = (k^{2} - r^{2})\left\langle v_{n}^{\varepsilon},\psi\right\rangle \quad \text{for } \psi\in \mathbb{S}_n\text{ and a.e. in }(0,1).
	\end{align}
	Thus, we choose $\psi = \partial_x v_{n}^{\varepsilon}$ in \eqref{Galerkinequav} and $r>k$ to get that
	\begin{align}\label{estim1}
		\begin{split}
			& \dfrac{d}{dx}\bigg[\big\rVert{\partial_x v_{n}^{\varepsilon}(x,\cdot)}\big\rVert^2+\big\rVert{\partial_y v_{n}^{\varepsilon}(x,\cdot)}\big\rVert^2 + (r^2 -k^2)\big\rVert{v_{n}^{\varepsilon}(x,\cdot)}\big\rVert^2\bigg] \\
			& = -2\left\langle\textbf{P}_{\varepsilon}v_{n}^{\varepsilon}(x,\cdot),\partial_xv_{n}^{\varepsilon}(x,\cdot)\right\rangle 
			- 4r \big\rVert{\partial_x v_{n}^{\varepsilon}(x,\cdot)}\big\rVert^2 
			\leq 2\log\left(\gamma\right)\big\rVert{\partial_yv_{n}^{\varepsilon}(x,\cdot)}\big\rVert^2+2\log\left(\gamma\right)\big\rVert{\partial_xv_{n}^{\varepsilon}(x,\cdot)}\big\rVert^2.
		\end{split}
	\end{align}
	By integrating the estimate \eqref{estim1} with respect to $x$, we arrive at
	\begin{align*}
		\big\rVert{\partial_xv_{n}^{\varepsilon}(x,\cdot)}\big\rVert^2+\big\rVert{\partial_yv_{n}^{\varepsilon}(x,\cdot)}\big\rVert^2 + (r^2 -k^2)\big\rVert{v_{n}^{\varepsilon}(x,\cdot)}\big\rVert^2 & \leq \big\rVert{\partial_x v_{n}^{\varepsilon}(0,\cdot)}\big\rVert^2+ \big\rVert{\partial_yv_{n}^{\varepsilon}(0,\cdot)}\big\rVert^2+(r^2 -k^2)\big\rVert{v_{n}^{\varepsilon}(0,\cdot)}\big\rVert^2 \\
		&+2\log\left(\gamma\right)\int_0^x\left(\big\rVert{\partial_yv_{n}^{\varepsilon}(s,\cdot)}\big\rVert^2+\big\rVert{\partial_xv_{n}^{\varepsilon}(s,\cdot)}\big\rVert^2\right)ds.
	\end{align*}
	By using Gronwall’s inequality, we thus get
	\begin{align}\label{Gronwallesti}
		\big\rVert{\partial_xv_{n}^{\varepsilon}(x,\cdot)}\big\rVert^2+\big\rVert{\partial_yv_{n}^{\varepsilon}(x,\cdot)}\big\rVert^2
		& + (r^2 -k^2)\big\rVert{v_{n}^{\varepsilon}(x,\cdot)}\big\rVert^2 \nonumber \\ & 
		\leq \left(\big\rVert{\partial_x v_{n}^{\varepsilon}(0,\cdot)}\big\rVert^2+ \big\rVert{\partial_y v_{n}^{\varepsilon}(0,\cdot)}\big\rVert^2+(r^2 -k^2)\big\rVert{v_{n}^{\varepsilon}(0,\cdot)}\big\rVert^2\right)\gamma^{2x}.
	\end{align}
	Since $v^{\varepsilon}_{n}(0,\cdot) = u^{\varepsilon}_{n}(0,\cdot)$ and $\partial_x v^{\varepsilon}_{n}(0,\cdot) = -rv_{n}^{\varepsilon}(0,\cdot) + \partial_x u^{\varepsilon}_{n}(0,\cdot)$, there exists a constant $C>0$ independent of $n$ such that $\big\rVert{\partial_x v_{n}^{\varepsilon}(0,\cdot)}\big\rVert^2+ \big\rVert{\partial_y v_{n}^{\varepsilon}(0,\cdot)}\big\rVert^2+(r^2 -k^2)\big\rVert{v_{n}^{\varepsilon}(0,\cdot)}\big\rVert^2 \le C$; cf. \eqref{stronginH1} and \eqref{noisecondition}. Therefore, for any $n\in\mathbb{N}$ we obtain 
	\begin{align*}
		&v_{n}^{\varepsilon} \text{ is bounded in } L^{\infty}\left(0,1;H^1(0,1)\right),\\
		&\partial_x v_{n}^{\varepsilon} \text{ is bounded in } L^{\infty}\left(0,1;L^2(0,1)\right).
	\end{align*}
	By the Banach–Alaoglu theorem, we can extract a subsequence of  $v_{n}^{\varepsilon}$ (which we still denote by $\big\{v_{n}^{\varepsilon}\big\}_{n\in\mathbb{N}}$) such that for each $\varepsilon > 0$,
	\begin{align*}
		&v_{n}^{\varepsilon}\to v^{\varepsilon} \text{ weakly-$\ast$ in } L^{\infty}\left(0,1;H^1(0,1)\right),\\
		&\partial_xv_{n}^{\varepsilon}\to\partial_xv^{\varepsilon} \text{ weakly-$\ast$ in } L^{\infty}\left(0,1;L^2(0,1)\right).
	\end{align*}
	Let $\mathbb{S}_n^{\perp}$ is a closed subspace of $H_0^1(0,1)$ such that $H_0^1(0,1) = \mathbb{S}_n\oplus\mathbb{S}_n^{\perp}$. For all $\psi\in H_0^1(0,1)$, we can write $\psi$ of the form $\psi = \psi_n+\psi_n^{\perp}$ where $\psi_n\in \mathbb{S}_n$ and $\psi_n^{\perp}\in \mathbb{S}_n^{\perp}$. Using the Galerkin equation \eqref{Galerkinequav}, we can show that $\partial_{x^2}^2v_{n}^{\varepsilon}\in L^2(0,1;\mathbb{S}_n)$. In particular, for $\psi_n\in\mathbb{S}_n$, we have
	\begin{align*}
		\left\langle\partial_{x^2}^2v_{n}^{\varepsilon}(x,\cdot),\psi_n\right\rangle = -\left\langle\partial_yv_{n}^{\varepsilon}(x,\cdot),\partial_y\psi_n\right\rangle - 2r\left\langle \partial_x v_n^{\varepsilon}(x,\cdot),\psi_n \right\rangle- \left\langle\textbf{P}_{\varepsilon}v_{n}^{\varepsilon}(x,\cdot),\psi_n\right\rangle
		+(k^2-r^2)\left\langle v_{n}^{\varepsilon}(x,\cdot),\psi_n\right\rangle.
	\end{align*}
	Using Cauchy-Schwarz’s inequality and the fact that $\rVert{\psi_n}\rVert_{H_0^1(0,1)}\leq\left\rVert{\psi}\right\rVert_{H_0^1(0,1)}$ with $\psi = \psi_n+\psi_n^{\perp}$, we have
	\begin{align}
		\begin{split}\label{CS1}
			\left|\left\langle\partial_yv_{n}^{\varepsilon}(x,\cdot),\partial_y\psi_n\right\rangle\right|\leq \big\rVert{\partial_yv_{n}^{\varepsilon}(x,\cdot)}\big\rVert \big\rVert{\partial_y\psi_n}\big\rVert\leq \big\rVert{\partial_y v_{n}^{\varepsilon}(x,\cdot)}\big\rVert\rVert{\psi}\rVert_{H_0^1(0,1)},
		\end{split}\\
		\begin{split}\label{CS1-1}
			\left|\left\langle \partial_{x}v_{n}^{\varepsilon}\left(x,\cdot\right),\psi_{n}\right\rangle \right|\le\left\Vert \partial_{x}v_{n}^{\varepsilon}\left(x,\cdot\right)\right\Vert \left\Vert \psi_{n}\right\Vert \le\left\Vert \partial_{x}v_{n}^{\varepsilon}\left(x,\cdot\right)\right\Vert \left\Vert \psi\right\Vert _{H_{0}^{1}\left(0,1\right)},
		\end{split}\\
		\begin{split}\label{CS2}
			\left|\left\langle\textbf{P}_{\varepsilon}v_{n}^{\varepsilon}(x,\cdot),\psi_n\right\rangle\right|\leq\big\rVert{\textbf{P}_{\varepsilon}v_{n}^{\varepsilon}(x,\cdot)}\big\rVert
			\rVert{\psi_n}\big\rVert\leq 2\log\left(\gamma\right)\left\rVert{\partial_y v_{n}^{\varepsilon}(x,\cdot)}\right\rVert\left\rVert{\psi}\right\rVert_{H_0^1(0,1)},
		\end{split}\\
		\begin{split}\label{CS3}
			\left|\left\langle v_{n}^{\varepsilon}(x,\cdot),\psi_n\right\rangle\right|\leq\left\rVert{v_{n}^{\varepsilon}(x,\cdot)}\right\rVert\left\rVert{\psi_n}\right\rVert\leq \big\rVert{v_{n}^{\varepsilon}(x,\cdot)}\big\rVert\rVert{\psi}\rVert_{H_0^1(0,1)}.
		\end{split}
	\end{align}
	Thus, combining the above four estimates \eqref{CS1}, \eqref{CS1-1}, \eqref{CS2} and \eqref{CS3}, we get
	\begin{align*}
		& \left\Vert \partial_{x^{2}}^{2}v_{n}^{\varepsilon}\left(x,\cdot\right)\right\Vert _{H^{-1}\left(0,1\right)}=\sup_{\psi\in H^{1}\left(0,1\right)\backslash\left\{ 0\right\} }\frac{\left\langle \partial_{x^{2}}^{2}v_{n}^{\varepsilon}\left(x,\cdot\right),\psi\right\rangle }{\left\Vert \psi\right\Vert _{H_{0}^{1}\left(0,1\right)}}\\
		& =\sup_{\psi\in H^{1}\left(0,1\right)\backslash\left\{ 0\right\} }\frac{-\left\langle \partial_{y}v_{n}^{\varepsilon}(x,\cdot),\partial_{y}\psi_{n}\right\rangle -2r\left\langle \partial_{x}v_{n}^{\varepsilon}(x,\cdot),\psi_{n}\right\rangle -\left\langle \textbf{P}_{\varepsilon}v_{n}^{\varepsilon}(x,\cdot),\psi_{n}\right\rangle +(k^{2}-r^{2})\left\langle v_{n}^{\varepsilon}(x,\cdot),\psi_{n}\right\rangle }{\left\Vert \psi\right\Vert _{H_{0}^{1}\left(0,1\right)}}\\
		& \le C\left[\left\Vert \partial_{x}v_{n}^{\varepsilon}\left(x,\cdot\right)\right\Vert +\left\Vert \partial_{y}v_{n}^{\varepsilon}\left(x,\cdot\right)\right\Vert +\left(r^{2}-k^{2}\right)\left\Vert v_{n}^{\varepsilon}\left(x,\cdot\right)\right\Vert \right].
	\end{align*}
	Henceforth, we can find a constant $C>0$ independent of $n$ to bound the $H^{-1}$ norm of $\partial_{x^2}^2 v_{n}^{\varepsilon}$ in the following manner:
	\begin{align}
		& \left\Vert \partial_{x^{2}}^{2}v_{n}^{\varepsilon}\left(x,\cdot\right)\right\Vert _{H^{-1}\left(0,1\right)}=\sup_{\psi\in H^{1}\left(0,1\right)\backslash\left\{ 0\right\} }\frac{\left\langle \partial_{x^{2}}^{2}v_{n}^{\varepsilon}\left(x,\cdot\right),\psi\right\rangle }{\left\Vert \psi\right\Vert _{H_{0}^{1}\left(0,1\right)}} \nonumber \\
		& =\sup_{\psi\in H^{1}\left(0,1\right)\backslash\left\{ 0\right\} }\frac{-\left\langle \partial_{y}v_{n}^{\varepsilon}(x,\cdot),\partial_{y}\psi_{n}\right\rangle -2r\left\langle \partial_{x}v_{n}^{\varepsilon}(x,\cdot),\psi_{n}\right\rangle -\left\langle \textbf{P}_{\varepsilon}v_{n}^{\varepsilon}(x,\cdot),\psi_{n}\right\rangle +(k^{2}-r^{2})\left\langle v_{n}^{\varepsilon}(x,\cdot),\psi_{n}\right\rangle }{\left\Vert \psi\right\Vert _{H_{0}^{1}\left(0,1\right)}} \nonumber \\
		& \le C\left(\left\Vert \partial_{x}v_{n}^{\varepsilon}\left(x,\cdot\right)\right\Vert +\left\Vert v_{n}^{\varepsilon}\left(x,\cdot\right)\right\Vert _{H^{1}\left(0,1\right)}\right). \label{vinf}
	\end{align}
	Square the above estimate, integrate the resulting estimate with respect to $x$ and then apply \eqref{Gronwallesti}. By the Banach–Alaoglu theorem, we can choose a subsequence of $v_{n}^{\varepsilon}$ so that
	\begin{align*}
		\partial_{x^2}^2 v_{n}^{\varepsilon}\to \partial_{x^2}^2 v^{\varepsilon} \text{ weakly in } L^{2}\left(0,1;H^{-1}(0,1)\right).
	\end{align*}
	Now, we combine the above weak-star and weak limits to conclude that the limit function $v^{\varepsilon}$ satisfies
	\begin{align}\label{uepschareac}
		v^{\varepsilon}\in L^{\infty}\left(0,1;H_0^1(0,1)\right),\quad \partial_x v^{\varepsilon}\in L^{\infty}\left(0,1;L^2(0,1)\right),\quad \partial_{x^2}^2 v^{\varepsilon}\in L^2\left(0,1,H^{-1}(0,1)\right),
	\end{align}
	which, by back-substitution $u_{n}^{\varepsilon}=e^{rx}v_n^{\varepsilon}$, leads to
	\begin{align}\label{uepschareacu}
		u^{\varepsilon}\in L^{\infty}\left(0,1;H_0^1(0,1)\right),\quad \partial_x u^{\varepsilon}\in L^{\infty}\left(0,1;L^2(0,1)\right),\quad \partial_{x^2}^2 u^{\varepsilon}\in L^2\left(0,1,H^{-1}(0,1)\right).
	\end{align}
	Note that the first and second properties in \eqref{uepschareacu} are obtained directly from \eqref{uepschareac}. Meanwhile, the last property in \eqref{uepschareacu} can be deduced from  \eqref{Galerkinequa} using the same token as in \eqref{vinf}. Moreover, using the Aubin-Lions lemma and the Rellich-Kondrachov embedding theorem $H_0^1(0,1) \subset L^2(0,1)$ for the first  and second properties in \eqref{uepschareacu}, we find that
	\begin{align}\label{stronglyinL2}
		u^{\varepsilon}_{n}\to u^{\varepsilon} \text{ strongly in } L^2\left(0,1;H_0^1(0,1)\right).
	\end{align}
	Now, we multiply both sides of the Galerkin equation \eqref{Galerkinequa} by an $x$-dependent test function $\tilde{w}\in C_c^{\infty}(0,1)$, then by integrate the resulting equation with respect to $x$ to get
	\begin{align*}
		\int_0^1\left\langle \frac{\partial^{2}u_{n}^{\varepsilon}}{\partial x^{2}},\nu\right\rangle dx +\int_0^1\left\langle \frac{\partial u_{n}^{\varepsilon}}{\partial y},\frac{\partial\nu}{\partial y}\right\rangle dx +\int_0^1\left\langle \mathbf{P}_{\varepsilon}u_{n}^{\varepsilon},\nu\right\rangle dx = k^{2}\int_0^1\left\langle u_{n}^{\varepsilon},\nu\right\rangle dx.
	\end{align*}
	where we have denoted by $\nu = \nu(x,y) = \tilde{w}(x)\psi(y)$ for $\psi \in \mathbb{S}_n$. Henceforth, we pass the limit of this equation as $n\to \infty$ and obtain
	\begin{align}\label{passinglimit}
		\int_0^1\left\langle \frac{\partial^{2}u^{\varepsilon}}{\partial x^{2}},\nu\right\rangle dx +\int_0^1\left\langle\frac{\partial u^{\varepsilon}}{\partial y},\frac{\partial\nu}{\partial y}\right\rangle dx +\int_0^1\left\langle \mathbf{P}_{\varepsilon}u^{\varepsilon},\nu\right\rangle dx = k^{2}\int_0^1\left\langle u^{\varepsilon},\nu\right\rangle dx.
	\end{align}
	The convergence of the second, third and fourth terms in the limit equation \eqref{passinglimit} is deduced using \eqref{stronglyinL2}. We remark that the limit equation \eqref{passinglimit} holds for $\nu = \tilde{w}\psi$ with $\psi\in H_0^1(0,1)$. In addition, since $\tilde{w}\in C_c^{\infty}(0,1)$ is arbitrary, our function $u^{\varepsilon}$ obtained from approximate solutions $u_{n}^{\varepsilon}$ satisfies the weak formulation \eqref{Galerkinequa} for every test function $\psi\in H_0^1(0,1)$. Besides, exploiting the Aubin-Lions lemma and the Gelfand triple $H_0^1(0,1)\subset L^2(0,1)\subset H^{-1}(0,1)$,  \eqref{uepschareacu} gives
	\begin{align}\label{uepschareac1}
		u^{\varepsilon}\in C\left([0,1];H_0^1(0,1)\right),\quad \partial_xu^{\varepsilon}\in C\left([0,1],L^2(0,1)\right).
	\end{align}
	Next, we verify the initial data by the following arguments. We take an arbitrary $x$-dependent function $\kappa\in C^1\left([0,1]\right)$ satisfying $\kappa(0) = 1$ and $\kappa(1) = 0$. By the second argument in \eqref{uepschareacu}, we have
	\begin{align*}
		\int_0^1\left\langle\partial_xu_{n}^{\varepsilon},\psi\right\rangle\kappa(x)dx \to \int_0^1\left\langle\partial_xu^{\varepsilon},\psi\right\rangle\kappa(x)dx \text{ for } \psi \in L^2(0,1).
	\end{align*}
	Then using integration by parts, we arrive at
	\begin{align*}
		-\left\langle u_{n}^{\varepsilon}(0),\psi\right\rangle\kappa(0)-\int_0^1\left\langle u_{n}^{\varepsilon},\psi\right\rangle\kappa_xdx \to -\left\langle u^{\varepsilon}(0),\psi\right\rangle\kappa(0)-\int_0^1\left\langle u^{\varepsilon},\psi\right\rangle\kappa_xdx.
	\end{align*}
	Henceforth, by the first argument in \eqref{uepschareacu}, we obtain the limit $\left\langle u_{n}^{\varepsilon}(0),\psi\right\rangle\to\left\langle u^{\varepsilon}(0),\psi\right\rangle$ for all $\psi\in H_0^1(0,1)$. By the strong $H^1$ convergence of $u_n^{\varepsilon}$ designated in \eqref{stronginH1}, we obtain $\left\langle u_{n}^{\varepsilon}(0),\psi\right\rangle\to\left\langle u^{\varepsilon}_0,\psi\right\rangle$ for all $\psi\in H^1(0,1)$. By the uniqueness of limit, it holds true that $\left\langle u^{\varepsilon}(0),\psi\right\rangle=\left\langle u^{\varepsilon}_0,\psi\right\rangle$ for all $\psi\in H^1(0,1)$. Thus, $u^{\varepsilon}(0) = u_0^{\varepsilon}$ for a.e. in $(0,1)$. We complete the existence result for system \eqref{reguHelm}.
	
	Now, let $u_1^{\varepsilon}$ and $u_2^{\varepsilon}$ be two weak solutions to system \eqref{reguHelm} that we have obtained in the above part. Consider $d^{\varepsilon} = e^{-rx}\left(u_1^{\varepsilon} - u_2^{\varepsilon}\right)$. Similar to \eqref{Galerkinequav}, $d^{\varepsilon}$ satisfies the following wave equation:
	\begin{align}\label{Galerkinequad}
		\left\langle \frac{\partial^{2}d^{\varepsilon}}{\partial x^{2}},\psi\right\rangle +\left\langle \frac{\partial d^{\varepsilon}}{\partial y},\frac{\partial\psi}{\partial y}\right\rangle + 2r\left\langle \frac{\partial d^{\varepsilon}}{\partial x},\psi  \right\rangle + \left\langle \mathbf{P}_{\varepsilon}d^{\varepsilon},\psi\right\rangle = (k^{2} - r^{2})\left\langle d^{\varepsilon},\psi\right\rangle \quad \text{for } \psi\in H_0^1(0,1).
	\end{align}
	Taking in \eqref{Galerkinequad} $\psi = \partial_xd^{\varepsilon}$, we follow the same process of getting \eqref{Gronwallesti}. Thus, we derive that
	\begin{align}
		\big\rVert{\partial_x d^{\varepsilon}(x,\cdot)}\big\rVert^2+\big\rVert{\partial_y d^{\varepsilon}(x,\cdot)}\big\rVert^2
		& + (r^2 -k^2)\big\rVert{d^{\varepsilon}(x,\cdot)}\big\rVert^2 \nonumber \\ & 
		\leq \left(\big\rVert{\partial_x d^{\varepsilon}(0,\cdot)}\big\rVert^2+ \big\rVert{\partial_y d^{\varepsilon}(0,\cdot)}\big\rVert^2+(r^2 -k^2)\big\rVert{d^{\varepsilon}(0,\cdot)}\big\rVert^2\right)\gamma^{2x}.\label{estd}
	\end{align}
	Since $u_1^{\varepsilon}$ and $u_2^{\varepsilon}$ have the same boundary and initial data, we find that
	\begin{align*}
		& d^{\varepsilon}\left(0,y\right)=u_{1}^{\varepsilon}\left(0,y\right)-u_{2}^{\varepsilon}\left(0,y\right)=0,\\
		& \partial_{x}d^{\varepsilon}\left(0,y\right)=-rd^{\varepsilon}\left(0,y\right)+\partial_{x}u_{1}^{\varepsilon}\left(0,y\right)-\partial_{x}u_{2}^{\varepsilon}\left(0,y\right) = 0,\\
		& \partial_{y}d^{\varepsilon}\left(0,y\right)=\partial_{y}u_{1}^{\varepsilon}\left(0,y\right)-\partial_{y}u_{2}^{\varepsilon}\left(0,y\right)=0.
	\end{align*}
	This shows that the left-hand side of \eqref{estd} is non-positive for $r>k$, which indicates the uniqueness result for  system \eqref{reguHelm}. Hence, we complete the proof of the theorem.
\end{proof}
	
It is worth mentioning that the weight $e^{-rx}$ is employed in the proof of Theorem \ref{thm:weak}. Commonly, this is called \textit{Carleman weight}, playing a vital role not only in prove the existence and uniqueness results, but also in convergence estimates of regularization schemes for inverse and ill-posed problems. This is manifested in the present PDE-approach as well as its variant for ill-posed parabolic problems; cf. \cite{Nguyen2019,Khoa2020a}).  In the so-called convexification method, which is a Tikhonov-like regularization technique, the Carleman weight is used to ``convexify'' nonlinear cost functionals to obtain a unique minimizer; cf. e.g. \cite{Klibanov2013,Khoa2020,Klibanov2022,Le2022}. The use of the smooth weight $e^{-rx}$ in the present work is based on the following reasons. First, it maximizes the presence of initial data since the weight is exponentially decreasing. Second, it helps to control large stability estimate of the stabilized operator (i.e. the term $\log(\gamma)$ as $\gamma\to \infty$) as well as the presence of terms involving $k$ that negatively affect the energy estimates. Below, we continue to apply the Carleman weight to prove the distance between regularized solution $u^{\varepsilon}$ and true solution $u$ in Theorem \ref{thm:4-1}. Then, convergence results follow.

\begin{theorem}[Rigorous mixed error estimates]\label{thm:4-1}
	Let $u\in C\left([0,1];H^2(0,1)\right)\cap C^1\left([0,1];H^1(0,1)\right)$ be a unique solution of the Cauchy problem  \eqref{helmholtz}--\eqref{boundary1}. Let $M>0$ independent of $\varepsilon$ and $k$ be such that the true solution satisfies $\left\Vert u\right\Vert_{C\left([0,1];H^2(0,1)\right)\cap C^1\left([0,1];H^1(0,1)\right)} \le M$. Let $u^{\varepsilon}$ be a unique weak solution of system \eqref{reguHelm} as defined in Definition \ref{def:weak1} and analyzed in Theorem \ref{thm:weak}.  Assume that $\log(\gamma)\ge k$ holds true. Then, the following mixed $L^2$-$H^1$ error estimates hold true for any $\rho > k$:
	\begin{align}
		& \left\rVert u^{\varepsilon}(x,\cdot)-u(x,\cdot)\right\rVert ^{2}\leq\left[\dfrac{\left(\rho^{2}+1\right)\varepsilon^{2}}{\rho^{2}-k^{2}}+\varepsilon^{2}+\dfrac{\left(1-e^{-2\rho x}\right)\rho^{-1}M^{2}\gamma^{-2}}{8k\left(\rho^{2}-k^{2}\right)}\right]\gamma^{2x}e^{2\rho x}, \label{mainest1}\\
		& \left\rVert u_{y}^{\varepsilon}(x,\cdot)-u_{y}(x,\cdot)\right\rVert ^{2}\leq\left[\left(\rho^{2}+1\right)\varepsilon^{2}+\varepsilon^{2}\left(\rho^{2}-k^{2}\right)+\dfrac{\left(1-e^{-2\rho x}\right)\rho^{-1}M^{2}\gamma^{-2}}{8k}\right]\gamma^{2x}e^{2\rho x}, \label{mainest2}\\
		& \left\Vert u_{x}^{\varepsilon}\left(x,\cdot\right)-u_{x}\left(x,\cdot\right)\right\Vert ^{2} \nonumber \\
		& \le\left[\left(\rho^{2}+1\right)\varepsilon^{2}+\varepsilon^{2}\left(\rho^{2}-k^{2}\right)+\dfrac{\left(1-e^{-2\rho x}\right)\rho^{-1}M^{2}\gamma^{-2}}{8k}\right]\gamma^{2x}e^{2\rho x}\left[e^{\rho x}+\frac{\rho}{\rho^{2}-k^{2}}\right]^{2}.\label{mainest3}
	\end{align}
\end{theorem}
\begin{proof}
	Let $w = e^{-\rho x}\left(u^{\varepsilon}-u\right)$ where $\rho>0$ is a constant needed to be chosen latter. From \eqref{eq:4} and \eqref{reguHelm}, we are capable of computing the difference equation. In particular, the diffence function $w$ satisfies the following damped wave equation:
	\begin{align}\label{diffequa}
		w_{xx} - w_{yy} + 2\rho w_x + (\rho^2 - k^2)w= -\textbf{P}_{\varepsilon}w - e^{-\rho x}\textbf{Q}_{\varepsilon}u.
	\end{align}
	This equation is associated with the Dirichlet boundary condition and the initial conditions
	\begin{align}
		\begin{cases}
			w(x,0) = w(x,1) = 0 &\text{ for } x\in[0,1],\\
			w(0,y) = u_0^{\varepsilon}(y) - u_0(0),\quad  \partial_xw(0,y) = -\rho w(0,y) &\text{ for } x\in[0,1].
		\end{cases}
	\end{align}
	Multiplying both sides of \eqref{diffequa} by $w_x$ and integrating the resulting equation with respect to $y$ from 0 to 1, we have
	\begin{align}
		\begin{split}
			\dfrac{d}{dx}\left\rVert{w_x(x,\cdot)}\right\rVert^2&+\dfrac{d}{dx}\left\rVert{w_y(x,\cdot)}\right\rVert^2+4\rho\left\rVert{w_x(x,\cdot)}\right\rVert^2\\
			&+(\rho^2-k^2)\dfrac{d}{dx}\left\rVert{w(x,\cdot)}\right\rVert^2 = -2\left\langle\textbf{P}_{\varepsilon}w(x,\cdot),w_x(x,\cdot)\right\rangle-2e^{-\rho x}\left\langle\textbf{Q}_{\varepsilon}u(x,\cdot),w_x(x,\cdot)\right\rangle,
		\end{split}
	\end{align}
or equivalently,
	\begin{align}\label{wxdiffequa}
		\begin{split}
			\dfrac{1}{\rho^2 - k^2}&\dfrac{d}{dx}\left(\left\rVert{w_x(x,\cdot)}\right\rVert^2+\left\rVert{w_y(x,\cdot)}\right\rVert^2\right)+\dfrac{4\rho}{\rho^2-k^2}\left\rVert{w_x(x,\cdot)}\right\rVert^2\\
			+&\dfrac{d}{dx}\left\rVert{w(x,\cdot)}\right\rVert^2 = -\dfrac{2}{\rho^2-k^2}\left\langle\textbf{P}_{\varepsilon}w(x,\cdot),w_x(x,\cdot)\right\rangle - \dfrac{2e^{-\rho x}}{\rho^2-k^2}\left\langle\textbf{Q}_{\varepsilon}u(x,\cdot),w_x(x,\cdot)\right\rangle.
		\end{split}
	\end{align}
	Now, we use the energy estimates of the perturbing and stabilized operatored deduced in \eqref{estQ}, \eqref{Q2}, \eqref{estP}. Then applying the Cauchy--Schwarz inequality, we estimate two terms in the right-hand side of \eqref{wxdiffequa} as follows:
	\begin{align*}
		\dfrac{2}{\rho^2-k^2}\left|\left\langle\textbf{P}_{\varepsilon}w(x,\cdot),w_x(x,\cdot)\right\rangle\right| &\leq \dfrac{2}{\rho^2-k^2}\left\rVert{\textbf{P}_{\varepsilon}w(x,\cdot)}\right\rVert \left\rVert{w_x(x,\cdot)}\right\rVert \leq \dfrac{4\log(\gamma)}{\rho^2-k^2}\left\rVert{w_y(x,\cdot)}\right\rVert\left\rVert{w_x(x,\cdot)}\right\rVert\\
		&\leq \dfrac{2\log(\gamma)}{\rho^2-k^2}\left\rVert{w_y(x,\cdot)}\right\rVert^2+\dfrac{2\log(\gamma)}{\rho^2-k^2}\left\rVert{w_x(x,\cdot)}\right\rVert^2,\\
		\dfrac{2e^{-\rho x}}{\rho^2-k^2}\left|\left\langle\textbf{Q}_{\varepsilon}u(x,\cdot),w_x(x,\cdot)\right\rangle\right|&\leq\dfrac{2e^{-\rho x}}{\rho^2- k^2}\left\rVert{\textbf{Q}_{\varepsilon}u(x,\cdot)}\right\rVert\left\rVert{w_x(x,\cdot)}\right\rVert\leq \dfrac{e^{-2\rho x}M^2\gamma^{-2}}{4k\left(\rho^2-k^2\right)}+\dfrac{4k\left\rVert{w_x(x,\cdot)}\right\rVert^2}{\rho^2-k^2}.
	\end{align*}
	Integrating \eqref{diffequa} with respect to $x$ from 0 to $\xi$, then the left-hand side of \eqref{wxdiffequa} is bounded from above by
	\begin{align}\label{diffinequa1}
		\begin{split}
			&\dfrac{1}{\rho^2 - k^2}\left(\left\rVert{w_x(\xi,\cdot)}\right\rVert^2+\left\rVert{w_y(\xi,\cdot)}\right\rVert^2\right) + \left\rVert{w(\xi,\cdot)}\right\rVert^2\\
			&
			\le  \dfrac{1}{\rho^2 - k^2}\left(\left\rVert{w_x(0,\cdot)}\right\rVert^2+\left\rVert{w_y(0,\cdot)}\right\rVert^2\right) +\left\rVert{w(0,\cdot)}\right\rVert^2\\
			&+\int_0^{\xi}\left[\dfrac{2\log(\gamma)}{\rho^2-k^2}\left\rVert{w_y(x,\cdot)}\right\rVert^2+\dfrac{2\log(\gamma)}{\rho^2-k^2}\left\rVert{w_x(x,\cdot)}\right\rVert^2+\dfrac{e^{-2\rho x}M^2\gamma^{-2}}{4k\left(\rho^2-k^2\right)}\right]dx
			+\int_0^{\xi}\dfrac{4k-4\rho}{\rho^2-k^2}\left\rVert{w_x}(x,\cdot)\right\rVert^2dx.
		\end{split}
	\end{align}
	In \eqref{diffinequa1}, we choose $\rho$ arbitrarily such that $\rho > k$. Furthermore, at $x = 0$, the difference function $w$ and its gradients are bounded, according to \eqref{noisecondition}, by
	\begin{align*}
		\dfrac{1}{\rho^2 - k^2}&\left(\left\rVert{w_x(0,\cdot)}\right\rVert^2+\left\rVert{w_y(0,\cdot)}\right\rVert^2\right) + \left\rVert{w(0,\cdot)}\right\rVert^2 \\
		&\leq \dfrac{1}{\rho^2 - k^2}\left(\rho^2\left\rVert{u_0^{\varepsilon}-u_0}\right\rVert^2+\left\rVert{\partial_yu_0^{\varepsilon}-\partial_yu_0}\right\rVert^2\right) + \left\rVert{u_0^{\varepsilon}-u_0}\right\rVert^2\leq \dfrac{\left(\rho^2+1\right)\varepsilon^2}{\rho^2 - k^2} + \varepsilon^2.
	\end{align*}
	Henceforth, we continue to estimate the left-hand side of \eqref{diffinequa1} as follows:
	\begin{align}\label{diffinequa2}
		\begin{split}
		 \dfrac{1}{\rho^2 - k^2}&\left(\left\rVert{w_x(\xi,\cdot)}\right\rVert^2+\left\rVert{w_y(\xi,\cdot)}\right\rVert^2\right)  + \left\rVert{w(\xi,\cdot)}\right\rVert^2  \leq \dfrac{\left(\rho^2+1\right)\varepsilon^2}{\rho^2 - k^2} + \varepsilon^2+\dfrac{\left(1-e^{-2\rho\xi}\right)\rho^{-1}M^2\gamma^{-2}}{8k\left(\rho^2-k^2\right)}\\
			&+2\log(\gamma)\int_0^{\xi}\left[\dfrac{1}{\rho^2 - k^2}\left(\left\rVert{w_x(x,\cdot)}\right\rVert^2+\left\rVert{w_y(x,\cdot)}\right\rVert^2\right) + \left\rVert{w(x,\cdot)}\right\rVert^2\right]dx.
		\end{split}
	\end{align}
	Thus, using Gronwall’s inequality we obtain
	\begin{align}\label{est1}
		\dfrac{1}{\rho^2 - k^2}\left(\left\rVert{w_x(\xi,\cdot)}\right\rVert^2+\left\rVert{w_y(\xi,\cdot)}\right\rVert^2\right) &+ \left\rVert{w(\xi,\cdot)}\right\rVert^2 \leq \left[\dfrac{\left(\rho^2+1\right)\varepsilon^2}{\rho^2 - k^2} + \varepsilon^2+\dfrac{\left(1-e^{-2\rho\xi}\right)\rho^{-1}M^2\gamma^{-2}}{8k\left(\rho^2-k^2\right)}\right]\gamma^{2\xi}.
	\end{align}
	We are now in a great position to deduce the error estimate by back-substitution $w = e^{-\rho x}\left(u^{\varepsilon}-u\right)$. Dropping the gradient terms in the left-hand side of \eqref{est1}, we arrive at
	\begin{align}\label{est2}
		\left\rVert{u^{\varepsilon}(\xi,\cdot) - u(\xi,\cdot)}\right\rVert^2 \leq \left[\dfrac{\left(\rho^2+1\right)\varepsilon^2}{\rho^2 - k^2} + \varepsilon^2+\dfrac{\left(1-e^{-2\rho\xi}\right)\rho^{-1}M^2\gamma^{-2}}{8k\left(\rho^2-k^2\right)}\right]\gamma^{2\xi}e^{2\rho \xi}.
	\end{align}
	Similarly, dropping the first and third terms in the left-hand side of \eqref{est1}, we get
	\begin{align*}
		\left\rVert{u^{\varepsilon}_{y}(\xi,\cdot) - u_{y}(\xi,\cdot)}\right\rVert^2 \leq \left[\left(\rho^2+1\right)\varepsilon^2 + \varepsilon^2 \left(\rho^2-k^2\right)+\dfrac{\left(1-e^{-2\rho\xi}\right)\rho^{-1}M^2\gamma^{-2}}{8k}\right]\gamma^{2\xi}e^{2\rho \xi}.
	\end{align*}
	In view of the fact that $w_x = -\rho w + e^{-\rho x} \left(u^{\varepsilon}_x - u_x\right)$, we find that
	\begin{align*}
		e^{-\rho\xi}\left\Vert u_{x}^{\varepsilon}\left(\xi,\cdot\right)-u_{x}\left(\xi,\cdot\right)\right\Vert  & -\rho\left\Vert w\left(\xi,\cdot\right)\right\Vert \le\left\Vert w_{x}\left(\xi,\cdot\right)\right\Vert \\
		& \le\left[\left(\rho^{2}+1\right)\varepsilon^{2}+\varepsilon^{2}\left(\rho^{2}-k^{2}\right)+\dfrac{\left(1-e^{-2\rho\xi}\right)\rho^{-1}M^{2}\gamma^{-2}}{8k}\right]^{1/2}\gamma^{\xi}e^{\rho\xi}.
	\end{align*}
	Combining this with \eqref{est2} leads to
	\begin{align*}
		& \left\Vert u_{x}^{\varepsilon}\left(\xi,\cdot\right)-u_{x}\left(\xi,\cdot\right)\right\Vert \\
		& \le\left[\left(\rho^{2}+1\right)\varepsilon^{2}+\varepsilon^{2}\left(\rho^{2}-k^{2}\right)+\dfrac{\left(1-e^{-2\rho\xi}\right)\rho^{-1}M^{2}\gamma^{-2}}{8k}\right]^{1/2}\gamma^{\xi}e^{2\rho\xi}+\rho\left\Vert u^{\varepsilon}\left(\xi,\cdot\right)-u\left(\xi,\cdot\right)\right\Vert \\
		& \le\left[\left(\rho^{2}+1\right)\varepsilon^{2}+\varepsilon^{2}\left(\rho^{2}-k^{2}\right)+\dfrac{\left(1-e^{-2\rho\xi}\right)\rho^{-1}M^{2}\gamma^{-2}}{8k}\right]^{1/2}\gamma^{\xi}e^{\rho\xi}\left[e^{\rho\xi}+\frac{\rho}{\rho^{2}-k^{2}}\right].
	\end{align*}
	This is equivalent to
	\begin{align*}
		& \left\Vert u_{x}^{\varepsilon}\left(\xi,\cdot\right)-u_{x}\left(\xi,\cdot\right)\right\Vert ^{2}\\
		& \le\left[\left(\rho^{2}+1\right)\varepsilon^{2}+\varepsilon^{2}\left(\rho^{2}-k^{2}\right)+\dfrac{\left(1-e^{-2\rho\xi}\right)\rho^{-1}M^{2}\gamma^{-2}}{8k}\right]\gamma^{2\xi}e^{2\rho\xi}\left[e^{\rho\xi}+\frac{\rho}{\rho^{2}-k^{2}}\right]^{2}.
	\end{align*}
	Hence, we complete the proof of the theorem.
\end{proof}

As a consequence of the above theorem, one can obtain the Lipschitz stability of $u^{\varepsilon}$. Indeed, let $v^{\varepsilon}$ be a solution to the same system as \eqref{reguHelm}, i.e.,
\begin{align*}
	\begin{cases}
		v_{xx}^{\varepsilon}-v_{yy}^{\varepsilon}+\mathbf{P}_{\varepsilon}v^{\varepsilon} =k^2 v^{\varepsilon} & \text{in }\Omega,\\
		v^{\varepsilon}\left(x,0\right)=v^{\varepsilon}\left(x,1\right)=0 & \text{for }x\in\left(0,1\right),\\
		v^{\varepsilon}\left(0,y\right)=v_{0}^{\varepsilon}\left(y\right),\quad v_{x}^{\varepsilon}\left(0,y\right)=0 & \text{for }y\in\left(0,1\right).
	\end{cases}
\end{align*}
Then, proceeding as in the proof of Theorem \ref{thm:4-1} (without the presence of $\mathbf{Q}_{\varepsilon}$), we can prove the following Lipschitz stability estimates:
\begin{align*}
	& \left\rVert u^{\varepsilon}(x,\cdot)-v^{\varepsilon}(x,\cdot)\right\rVert ^{2}\leq\left[\dfrac{\rho^{2}\left\rVert u_{0}^{\varepsilon}-v_{0}^{\varepsilon}\right\rVert ^{2}+\left\rVert \partial_{y}u_{0}^{\varepsilon}-\partial_{y}v_{0}^{\varepsilon}\right\rVert ^{2}}{\rho^{2}-k^{2}}+\left\rVert u_{0}^{\varepsilon}-v_{0}^{\varepsilon}\right\rVert ^{2}\right]\gamma^{2x}e^{2\rho x},\\
	& \left\rVert u_{y}^{\varepsilon}(x,\cdot)-v_{y}^{\varepsilon}(x,\cdot)\right\rVert ^{2}\leq\left[\rho^{2}\left\rVert u_{0}^{\varepsilon}-v_{0}^{\varepsilon}\right\rVert ^{2}+\left\rVert \partial_{y}u_{0}^{\varepsilon}-\partial_{y}v_{0}^{\varepsilon}\right\rVert ^{2}+\left(\rho^{2}-k^{2}\right)\left\rVert u_{0}^{\varepsilon}-v_{0}^{\varepsilon}\right\rVert ^{2}\right]\gamma^{2x}e^{2\rho x},\\
	& \left\Vert u_{x}^{\varepsilon}\left(x,\cdot\right)-v_{x}^{\varepsilon}\left(x,\cdot\right)\right\Vert ^{2}\\
	& \le\left[\rho^{2}\left\rVert u_{0}^{\varepsilon}-v_{0}^{\varepsilon}\right\rVert ^{2}+\left\rVert \partial_{y}u_{0}^{\varepsilon}-\partial_{y}v_{0}^{\varepsilon}\right\rVert ^{2}+\left(\rho^{2}-k^{2}\right)\left\rVert u_{0}^{\varepsilon}-v_{0}^{\varepsilon}\right\rVert ^{2}\right]\gamma^{2x}e^{2\rho x}\left[e^{\rho x}+\frac{\rho}{\rho^{2}-k^{2}}\right]^{2}.
\end{align*}
The estimates are valid for any $\gamma$ satisfying $\log(\gamma) \ge k$. Now, to prove the convergence of $u^{\varepsilon}$ toward $u$, we below rely on a suitable choice of $\gamma$. In this regard, $\gamma$ is appropriately dependent of the noise level $\varepsilon$. Since Theorem \ref{thm:4} below is a direct consequence of Theorem \ref{thm:4-1}, its proof is omitted.


\begin{theorem}[Interior convergence estimates]\label{thm:4}
	Under the assumptions of Theorem \ref{thm:4-1}, if we choose $\gamma = \varepsilon^{-\alpha}$ for $\alpha \in (0,1]$ and $\varepsilon\le e^{-k/\alpha}$, then the following H\"older rates of convergence hold true:
	\begin{align} 
		& \left\rVert u^{\varepsilon}(x,\cdot)-u(x,\cdot)\right\rVert ^{2}\leq\left[\dfrac{\left(\rho^{2}+1\right)\varepsilon^{2\left(1-\alpha x\right)}}{\rho^{2}-k^{2}}+\varepsilon^{2\left(1-\alpha x\right)}+\dfrac{\left(1-e^{-2\rho x}\right)\rho^{-1}M^{2}\varepsilon^{2\alpha\left(1-x\right)}}{8k\left(\rho^{2}-k^{2}\right)}\right]e^{2\rho x}, \label{nextest1}\\
		& \left\rVert u_{y}^{\varepsilon}(x,\cdot)-u_{y}(x,\cdot)\right\rVert ^{2}\leq\left[\left(\rho^{2}+1\right)\varepsilon^{2\left(1-\alpha x\right)}+\left(\rho^{2}-k^{2}\right)\varepsilon^{2\left(1-\alpha x\right)}+\dfrac{\left(1-e^{-2\rho x}\right)\rho^{-1}M^{2}\varepsilon^{2\alpha\left(1-x\right)}}{8k}\right]e^{2\rho x},\label{nextest2}\\
		& \left\Vert u_{x}^{\varepsilon}\left(x,\cdot\right)-u_{x}\left(x,\cdot\right)\right\Vert ^{2} \nonumber\\
		& \le\left[\left(\rho^{2}+1\right)\varepsilon^{2\left(1-\alpha x\right)}+\left(\rho^{2}-k^{2}\right)\varepsilon^{2\left(1-\alpha x\right)}+\dfrac{\left(1-e^{-2\rho x}\right)\rho^{-1}M^{2}\varepsilon^{2\alpha\left(1-x\right)}}{8k}\right]e^{2\rho x}\left[e^{\rho x}+\frac{\rho}{\rho^{2}-k^{2}}\right]^{2}.\label{nextest3}
	\end{align}
\end{theorem}

It is straightforward that regardless of the choice of $\gamma$, we do not have the convergence at $x=1$ due to the term $\gamma^{2-2x}$ in \eqref{mainest1}--\eqref{mainest3}. The same can be manifested in \eqref{nextest1}--\eqref{nextest3}. For the interior points $x\in (0,1)$, we obtain the H\"older rate of convergence in mixed $L^2$--$H^1$ norms. We also remark that even though convergence result \eqref{nextest1} is point wise in the frequency $k$, the corresponding uniform-in-$k$ estimate can be obtained by a suitable choice of $\rho$. Observing the exponential growth (in $\rho$) of the inverse Carleman weight, $\rho$ should be close to $k$ to ``optimize'' that growth. However, the closeness should also ensure the $L^2$ convergence in $\varepsilon$. Thus, one possibility is taking $\rho = k \log\left(\log\left(\varepsilon^{-\beta}\right)\right)$ for $\beta\in(0,1)$ and $\varepsilon$ being sufficiently small such that $\log\left(\log\left(\varepsilon^{-\beta}\right)\right)\ge \sqrt{2}$. By this way, $1<\rho^2 \le 2(\rho^2 - k^2)$ and thus, it follows from $\varepsilon^{2\alpha(1-x)}\ge \varepsilon^{2(1-\alpha x)}$ that
\begin{align}\label{uniformk}
\left\rVert u^{\varepsilon}(x,\cdot)-u(x,\cdot)\right\rVert ^{2}\leq C^2\varepsilon^{2\alpha\left(1-x\right)}\log^{2kx}\left(\varepsilon^{-\beta}\right),
\end{align}
where $C>0$ is a constant depending only on $M$. 

\begin{theorem}[Boundary convergence estimates]\label{thm:6}
	 Under the assumptions of Theorem \ref{thm:4-1}, we can always find $x_{\varepsilon}\in (0,1)$ such that $\lim_{\varepsilon\to 0} x_{\varepsilon} = 0$ and $u^{\varepsilon}(1-x_{\varepsilon},\cdot)$ approximates well $u(1,\cdot)$. In particular, we can find a constant $C(\rho,k,M)>0$ depending on $\rho,k,M$ such that the following logarithmic convergence estimate holds true:
	\begin{align}\label{lograte1}
		\left\Vert u^{\varepsilon}\left(1-x_{\varepsilon},\cdot\right)-u\left(1,\cdot\right)\right\Vert \le\frac{C\left(\rho,k,M\right)}{1+\sqrt{1+4\log\left(\varepsilon^{-\alpha}\right)}}.
	\end{align}
	Moreover, for $\varepsilon$ being sufficiently small such that $\log\left(\log\left(\varepsilon^{-\beta}\right)\right)\ge \sqrt{2}$, the corresponding uniform-in-$k$ error estimate can be rigorously obtained in the following form:
	\begin{align}\label{lograte2}
		\left\Vert u^{\varepsilon}\left(1-x_{\varepsilon},\cdot\right)-u\left(1,\cdot\right)\right\Vert \le\frac{C(M)}{1-k\log\left(\log\left(\varepsilon^{-\beta}\right)\right)+\sqrt{\left(1-k\log\left(\log\left(\varepsilon^{-\beta}\right)\right)\right)^{2}+4k\log\left(\log\left(\varepsilon^{-\beta}\right)\right)+4\log\left(\varepsilon^{-\alpha}\right)}},
	\end{align}
	where $C(M)>0$ is a constant depending only on $M$.
\end{theorem}
\begin{proof}
	We, for brevity, can find some constant $C(\rho,k,M)>0$ such that for $x\in (0,1)$,
	\[
	\left\rVert u^{\varepsilon}(x,\cdot)-u(x,\cdot)\right\rVert ^{2}\le C^2\left(\rho,k,M\right)\varepsilon^{2\alpha\left(1-x\right)},
	\]
	deduced from \eqref{nextest1}. Thereby, using the triangle inequality, we get
	\begin{align*}
		\left\Vert u^{\varepsilon}\left(1-x_{\varepsilon},\cdot\right)-u\left(1,\cdot\right)\right\Vert  & \le\left\Vert u^{\varepsilon}\left(1-x_{\varepsilon},\cdot\right)-u\left(1-x_{\varepsilon},\cdot\right)\right\Vert +\left\Vert u\left(1-x_{\varepsilon},\cdot\right)-u\left(1,\cdot\right)\right\Vert \\
		& \le C\left(\rho,k,M\right)\varepsilon^{\alpha x_{\varepsilon}}+x_{\varepsilon}\left\Vert u_{x}\right\Vert _{C\left(\left[0,1\right];L^{2}\left(0,1\right)\right)}.
	\end{align*}
	Therefore, to prove the target estimate \eqref{lograte1}, we seek the infimum $\frac{1}{2}\inf_{x_{\varepsilon}>0}\left(\varepsilon^{2\alpha x_{\varepsilon}} + x_{\varepsilon}\right)$. When doing so, we solve the following algebraic equation:
	\begin{align}\label{alge}
		\varepsilon^{\alpha x_{\varepsilon}} = x_{\varepsilon},
	\end{align}
	expecting that $x_{\varepsilon}\in (0,1)$ is sufficiently small. In terms of $x_{\varepsilon}$, we see that the left-hand side of \eqref{alge} is decreasing, while the right-hand side grows linearly. Thus, for every $\varepsilon>0$, there exists a unique solution $x_{\varepsilon}\in (0,1)$ to \eqref{alge}. Taking now the logarithm on both sides of  \eqref{alge} and using the standard inequality $\log(a)> 1 - a^{-1}$ for any $a>0$, we arrive at the following quadratic inequality:
	\[
	\alpha\log\left(\varepsilon\right)x_{\varepsilon}^{2}-x_{\varepsilon}+1>0.
	\]
	Since the discriminant is positive, i.e.  $1+4\log\left(\varepsilon^{-\alpha}\right)>0$, and $\log(\varepsilon)<1$, we find that
	\[
	x_{\varepsilon}\in\left(\frac{1+\sqrt{1+4\log\left(\varepsilon^{-\alpha}\right)}}{2\alpha\log\left(\varepsilon\right)},\frac{1-\sqrt{1+4\log\left(\varepsilon^{-\alpha}\right)}}{2\alpha\log\left(\varepsilon\right)}\right).
	\]
	By the rationalizing technique, it is clear that $x_{\varepsilon}\to 0$ as $\varepsilon \to 0$. In particular, we have
	\[
	\lim_{\varepsilon\to0}\left(\frac{1-\sqrt{1+4\log\left(\varepsilon^{-\alpha}\right)}}{2\alpha\log\left(\varepsilon\right)}\right)=\lim_{\varepsilon\to0}\frac{-4\log\left(\varepsilon^{-\alpha}\right)}{2\alpha\log\left(\varepsilon\right)\left(1+\sqrt{1+4\log\left(\varepsilon^{-\alpha}\right)}\right)}=\lim_{\varepsilon\to0}\frac{2}{1+\sqrt{1+4\log\left(\varepsilon^{-\alpha}\right)}}=0,
	\]
	and similarly,
	\[
	\lim_{\varepsilon\to0}\left(\frac{1+\sqrt{1+4\log\left(\varepsilon^{-\alpha}\right)}}{2\alpha\log\left(\varepsilon\right)}\right)=\lim_{\varepsilon\to0}\frac{-4\log\left(\varepsilon^{-\alpha}\right)}{2\alpha\log\left(\varepsilon\right)\left(1-\sqrt{1+4\log\left(\varepsilon^{-\alpha}\right)}\right)}=\lim_{\varepsilon\to0}\frac{2}{1-\sqrt{1+4\log\left(\varepsilon^{-\alpha}\right)}}=0.
	\]
	Henceforth, we obtain the following error estimate point-wise in $k$:
	\[
	\left\Vert u^{\varepsilon}\left(1-x_{\varepsilon},\cdot\right)-u\left(1,\cdot\right)\right\Vert \le\frac{2\left(C\left(\rho,k,M\right)+M\right)}{1+\sqrt{1+4\log\left(\varepsilon^{-\alpha}\right)}}.
	\]
	We can prove the uniform-in-$k$ error estimate \eqref{lograte2} using the same vein. To do so, we rely on the estimate we have briefly analyzed in \eqref{uniformk}. In this case, we have
	\begin{align}\label{uni}
	\left\Vert u^{\varepsilon}\left(1-x_{\varepsilon},\cdot\right)-u\left(1,\cdot\right)\right\Vert \le C\varepsilon^{\alpha x_{\varepsilon}}\log^{k\left(1-x_{\varepsilon}\right)}\left(\varepsilon^{-\beta}\right)+x_{\varepsilon}\left\Vert u_{x}\right\Vert _{C\left(\left[0,1\right];L^{2}\left(0,1\right)\right)}.
	\end{align}
	Therefore, we study the infimum $\frac{1}{2}\inf_{x_{\varepsilon}>0}\left(\varepsilon^{\alpha x_{\varepsilon}}\log^{k\left(1-x_{\varepsilon}\right)}\left(\varepsilon^{-\beta}\right)+x_{\varepsilon}\right)$ by solving the following algebraic equation: $\varepsilon^{\alpha x_{\varepsilon}}\log^{k\left(1-x_{\varepsilon}\right)}\left(\varepsilon^{-\beta}\right)=x_{\varepsilon}$. Taking the logarithm on both sides of this equation and then using the logarithmic inequality $\log(a)> 1 - a^{-1}$ for any $a>0$, we obtain
	\[
	\alpha x_{\varepsilon}\log\left(\varepsilon\right)+k\left(1-x_{\varepsilon}\right)\log\left(\log\left(\varepsilon^{-\beta}\right)\right)=\log\left(x_{\varepsilon}\right)>1-\frac{1}{x_{\varepsilon}},
	\]
	or equivalently,
	\begin{align}\label{logineq}
	\left[\alpha\log\left(\varepsilon\right)-k\log\left(\log\left(\varepsilon^{-\beta}\right)\right)\right]x_{\varepsilon}^{2}-\left(1-k\log\left(\log\left(\varepsilon^{-\beta}\right)\right)\right)x_{\varepsilon}+1>0.
	\end{align}
	In view of the facts that $\alpha\log\left(\varepsilon\right)-k\log\left(\log\left(\varepsilon^{-\beta}\right)\right)<0$ and 
	\begin{align*}
		D_{\varepsilon} := \left(1-k\log\left(\log\left(\varepsilon^{-\beta}\right)\right)\right)^{2} & -4\left[\alpha\log\left(\varepsilon\right)-k\log\left(\log\left(\varepsilon^{-\beta}\right)\right)\right]\\
		& =\left(1-k\log\left(\log\left(\varepsilon^{-\beta}\right)\right)\right)^{2}+4k\log\left(\log\left(\varepsilon^{-\beta}\right)\right)+4\log\left(\varepsilon^{-\alpha}\right)>0,
	\end{align*}
	the above quadratic inequality \eqref{logineq} admits the following solution:
	\[
	x_{\varepsilon}\in\left(\frac{1-k\log\left(\log\left(\varepsilon^{-\beta}\right)\right)+\sqrt{D_{\varepsilon}}}{2\left(\alpha\log\left(\varepsilon\right)-k\log\left(\log\left(\varepsilon^{-\beta}\right)\right)\right)},\frac{1-k\log\left(\log\left(\varepsilon^{-\beta}\right)\right)-\sqrt{D_{\varepsilon}}}{2\left(\alpha\log\left(\varepsilon\right)-k\log\left(\log\left(\varepsilon^{-\beta}\right)\right)\right)}\right).
	\]
	By the rationalizing technique, we show the zero limit of $x_{\varepsilon}$ as follows:
	\begin{align*}
		& \lim_{\varepsilon\to0}\left[\frac{1-k\log\left(\log\left(\varepsilon^{-\beta}\right)\right)-\sqrt{D_{\varepsilon}}}{2\left(\alpha\log\left(\varepsilon\right)-k\log\left(\log\left(\varepsilon^{-\beta}\right)\right)\right)}\right]\\
		& =\lim_{\varepsilon\to0}\frac{\left[1-k\log\left(\log\left(\varepsilon^{-\beta}\right)\right)\right]^{2}-D_{\varepsilon}}{2\left(\alpha\log\left(\varepsilon\right)-k\log\left(\log\left(\varepsilon^{-\beta}\right)\right)\right)\left(1-k\log\left(\log\left(\varepsilon^{-\beta}\right)\right)+\sqrt{D_{\varepsilon}}\right)}\\
		& =\lim_{\varepsilon\to0}\frac{2}{1-k\log\left(\log\left(\varepsilon^{-\beta}\right)\right)+\sqrt{D_{\varepsilon}}}=0.
	\end{align*}
	Hence, it follows from \eqref{uni} that
	\[
	\left\Vert u^{\varepsilon}\left(1-x_{\varepsilon},\cdot\right)-u\left(1,\cdot\right)\right\Vert \le\frac{2C+2M}{1-k\log\left(\log\left(\varepsilon^{-\beta}\right)\right)+\sqrt{D_{\varepsilon}}}.
	\]
	Therefore, we complete the proof of the theorem.
\end{proof}

%

\section{Iterative scheme}\label{sec:4}

In the previous section, we have studied the strong convergence of $u^{\varepsilon}$ toward the exact solution $u$. Observe that by the choice of the perturbation and stabilization in our regularization problem \eqref{reguHelm} is not really computable, albeit the problem is linear in terms of its solution and the series is truncated appropriately in $\varepsilon$. For each $\varepsilon>0$, we construct an iterative sequence $\left\{u^{\varepsilon,q}\right\}_{q\in\mathbb{N}}$ to approximate $u^{\varepsilon}$ of \eqref{reguHelm} in the following sense.
\begin{align}\label{iterative}
\begin{cases}
	u_{xx}^{\varepsilon,q+1}-u_{yy}^{\varepsilon,q+1}+\mathbf{P}_{\varepsilon}u^{\varepsilon,q}=k^{2}u^{\varepsilon,q} & \text{in }\Omega,\\
	u^{\varepsilon,q+1}\left(x,0\right)=u^{\varepsilon,q+1}\left(x,1\right)=0 & \text{for }x\in\left(0,1\right),\\
	u^{\varepsilon,q+1}\left(0,y\right)=u_{0}^{\varepsilon}\left(y\right),\quad u_{x}^{\varepsilon,q+1}\left(0,y\right)=0 & \text{for }y\in\left(0,1\right).
\end{cases}
\end{align}
In the above iteration scheme, we choose the initial guess $u^{\varepsilon,0}$ (i.e. $q=0$) is chosen to be $u_0^{\varepsilon}(y)$. We choose this initial guess because it is a unique function that contains close information of our sought $u^{\varepsilon}$ under stabilization. Even though proposing this iterative scheme can be a curse of dimensionality, our previous work \cite{Khoa2020a} shows numerically that we only need a very small amount of iteration steps (about $q=2$) to obtain a fine approximation.

For every $\varepsilon$, we can divide the interval $[0,1]$ of $x$ into many finite subintervals. Our convergence result below shows that the mesh-width in $x$ should be dependent of the noise level $\varepsilon$ for local approximation of the regularized solution $u^{\varepsilon}$. It is sufficient to analyze the convergence of the linearization in a subinterval $[0,\overline{x}]\subset [0,1]$ since we can repeat the linearization procedure in every subinterval. Below, we prove the strong convergence of the scheme in a suitable topology involving the space  $\Upsilon_{\overline{x}} = C\left(\left[0,\overline{x}\right];L^{2}\left(0,1\right)\right)$. 

\begin{theorem}[Convergence of linearization]
	Under the assumptions of Theorem \ref{thm:4-1}, the approximate solution $u^{\varepsilon,q}$ defined in \eqref{iterative} is strongly convergent in $\Upsilon_{\Delta x}$. Moreover, for each $\varepsilon>0$, there exists a sufficiently small $\eta_{\varepsilon}\in (0,1)$ such that for $\sigma \ge 1$,
	\begin{align*}
		& \left\Vert u_{y}^{\varepsilon,q}-u_{y}^{\varepsilon}\right\Vert _{\Upsilon_{\overline{x}}}+\sigma\log\left(\varepsilon^{-\alpha}\right)\left\Vert u^{\varepsilon,q}-u^{\varepsilon}\right\Vert _{\Upsilon_{\overline{x}}}\le\frac{\eta_{\varepsilon}^{q}}{1-\eta_{\varepsilon}}\left(\left\Vert u_{y}^{\varepsilon,1}-u_{y}^{\varepsilon,0}\right\Vert _{\Upsilon_{\overline{x}}}+k\left\Vert u^{\varepsilon,1}-u^{\varepsilon,0}\right\Vert _{\Upsilon_{\overline{x}}}\right),\\
		& \left\Vert u_{x}^{\varepsilon,q}-u_{x}^{\varepsilon}\right\Vert _{\Upsilon_{\overline{x}}}\le\frac{\eta_{\varepsilon}^{q}}{1-\eta_{\varepsilon}}\left(\left\Vert u_{y}^{\varepsilon,1}-u_{y}^{\varepsilon,0}\right\Vert _{\Upsilon_{\overline{x}}}+k\left\Vert u^{\varepsilon,1}-u^{\varepsilon,0}\right\Vert _{\Upsilon_{\overline{x}}}\right).
	\end{align*}
\end{theorem}
\begin{proof}
	We proceed as in the proof of Theorem \ref{thm:4-1}, scrutinizing energy estimates under the Carleman weight of the form $e^{-\kappa x}$. Let $ W^{q+1} = e^{-\kappa x}\left(u^{\varepsilon,q+1} - u^{\varepsilon,q}\right)$ where $\kappa > 0$ is a constant chosen later. Thus, $W^{q+1}$ satisfies the following system:
	\[
	\begin{cases}
		W_{xx}^{q+1}-W_{yy}^{q+1}+2\kappa W_{x}^{q+1} + \kappa^2 W^{q+1}=-\mathbf{P}_{\varepsilon}W^{q} + k^2 W^{q}& \text{in }\Omega\\
		W^{q+1}\left(x,0\right)=W^{q+1}\left(x,1\right)=0 & \text{for }x\in\left(0,1\right),\\
		W^{q+1}\left(0,y\right)=0,\quad W_{x}^{q+1}\left(0,y\right)=0 & \text{for }y\in\left(0,1\right).
	\end{cases}
	\]
	Multiplying the difference equation by $W_{x}^{k+1}$ and then integrating the resulting equation from 0 to 1, we find that
	\begin{align*}
		\frac{d}{dx}\left\Vert W_{x}^{q+1}\left(x,\cdot\right)\right\Vert ^{2} & +\frac{d}{dx}\left\Vert W_{y}^{q+1}\left(x,\cdot\right)\right\Vert ^{2}+\kappa^{2}\frac{d}{dx}\left\Vert W^{q+1}\left(x,\cdot\right)\right\Vert ^{2}\\
		& =-2\left\langle \mathbf{P}_{\varepsilon}W^{q},W_{x}^{q+1}\right\rangle +2k^{2}\left\langle W^{q},W_{x}^{q+1}\right\rangle -4\kappa\left\Vert W_{x}^{q+1}\left(x,\cdot\right)\right\Vert ^{2}.
	\end{align*}
	Integrating the above equation from 0 to $x$ and choosing $\kappa = \sigma\log(\gamma) \ge k$ for $\sigma\ge 1$, we estimate that
	\begin{align}
		 & \left\Vert W_{x}^{q+1}\left(x,\cdot\right)\right\Vert ^{2}+\left\Vert W_{y}^{q+1}\left(x,\cdot\right)\right\Vert ^{2}+\kappa^{2}\left\Vert W^{q+1}\left(x,\cdot\right)\right\Vert ^{2} \nonumber \\
		& \le\int_{0}^{x}\left[\log\left(\gamma\right)\left\Vert W_{y}^{q}\left(s,\cdot\right)\right\Vert ^{2}+k^{2}\left\Vert W^{q}\left(s,\cdot\right)\right\Vert ^{2}+\left(4\log\left(\gamma\right)+1\right)\left\Vert W_{x}^{q+1}\left(s,\cdot\right)\right\Vert ^{2}-4\kappa\left\Vert W_{x}^{q+1}\left(s,\cdot\right)\right\Vert ^{2}\right]ds\nonumber\\
		& \le\overline{x}\left(\log\left(\gamma\right)\left\Vert W_{y}^{q}\right\Vert _{\Upsilon_{\overline{x}}}^{2}+k^{2}\left\Vert W^{q}\right\Vert _{\Upsilon_{\overline{x}}}^{2}\right)+\int_{0}^{x}\left\Vert W_{x}^{q+1}\left(s,\cdot\right)\right\Vert ^{2}ds.\nonumber 
	\end{align}
	By the Gronwall inequality, we have
	\begin{align}\label{West}
	\left\Vert W_{x}^{q+1}\left(x,\cdot\right)\right\Vert ^{2}+\left\Vert W_{y}^{q+1}\left(x,\cdot\right)\right\Vert ^{2}+\kappa^{2}\left\Vert W^{q+1}\left(x,\cdot\right)\right\Vert ^{2}\le\overline{x}\left(\log\left(\gamma\right)\left\Vert W_{y}^{q}\right\Vert _{\Upsilon_{\overline{x}}}^{2}+k^{2}\left\Vert W^{q}\right\Vert _{\Upsilon_{\overline{x}}}^{2}\right)e^{\overline{x}}.
	\end{align}
	Dropping the first term on the left-hand side of \eqref{West}, we, after back-substitution, get
	\begin{align*}
		\left\Vert u_{y}^{\varepsilon,q+1}\left(x,\cdot\right)-u_{y}^{\varepsilon,q}\left(x,\cdot\right)\right\Vert ^{2} & +\kappa^{2}\left\Vert u^{\varepsilon,q+1}\left(x,\cdot\right)-u^{\varepsilon,q}\left(x,\cdot\right)\right\Vert ^{2}\\
		& \le\overline{x}\log\left(\gamma\right)e^{\overline{x}}\left(\left\Vert u_{y}^{\varepsilon,q}-u_{y}^{\varepsilon,q-1}\right\Vert _{\Upsilon_{\overline{x}}}^{2}+k\left\Vert u^{\varepsilon,q}-u^{\varepsilon,q-1}\right\Vert _{\Upsilon_{\overline{x}}}^{2}\right)e^{2\kappa x}.
	\end{align*}
	This leads to
	\begin{align}
	\left\Vert u_{y}^{\varepsilon,q+1}-u_{y}^{\varepsilon,q}\right\Vert _{\Upsilon_{\overline{x}}}^{2}+\kappa^{2}\left\Vert u^{\varepsilon,q+1}-u^{\varepsilon,q}\right\Vert _{\Upsilon_{\overline{x}}}^{2}\le\overline{x}\log\left(\gamma\right)e^{\overline{x}}\gamma^{2\sigma\overline{x}}\left(\left\Vert u_{y}^{\varepsilon,q}-u_{y}^{\varepsilon,q-1}\right\Vert _{\Upsilon_{\overline{x}}}^{2}+k\left\Vert u^{\varepsilon,q}-u^{\varepsilon,q-1}\right\Vert _{\Upsilon_{\overline{x}}}^{2}\right).\label{West2}
	\end{align}
	Next, by the standard inequality $(a-b)^2 \ge \frac{1}{2}a^2 - b^2$ for all $a,b\in\mathbb{R}$, we have
	\begin{align}
		& \frac{1}{2}\left\Vert u_{x}^{\varepsilon,q+1}\left(x,\cdot\right)-u_{x}^{\varepsilon,q}\left(x,\cdot\right)\right\Vert ^{2}-\kappa^{2}\left\Vert u^{\varepsilon,q+1}\left(x,\cdot\right)-u^{\varepsilon,q}\left(x,\cdot\right)\right\Vert ^{2}\nonumber \\
		& \le\left\Vert u_{x}^{\varepsilon,q+1}\left(x,\cdot\right)-u_{x}^{\varepsilon,q}\left(x,\cdot\right)-\kappa\left[u^{\varepsilon,q+1}\left(x,\cdot\right)-u^{\varepsilon,q}\left(x,\cdot\right)\right]\right\Vert ^{2}=e^{2\kappa x}\left\Vert W_{x}^{q+1}\left(x,\cdot\right)\right\Vert ^{2}\nonumber \\
		& \le\gamma^{2\sigma x}\overline{x}\left(\log\left(\gamma\right)\left\Vert u_{y}^{\varepsilon,q}-u_{y}^{\varepsilon,q-1}\right\Vert _{\Upsilon_{\overline{x}}}^{2}+k^{2}\left\Vert u^{\varepsilon,q}-u^{\varepsilon,q-1}\right\Vert _{\Upsilon_{\overline{x}}}^{2}\right)e^{\overline{x}}.\label{West1}
	\end{align}
	where we have used dropping the second and third terms on the left-hand side of \eqref{West}. It now follows from \eqref{West1} and \eqref{West} that
	\begin{align}
		\left\Vert u_{x}^{\varepsilon,q+1}-u_{x}^{\varepsilon,q}\right\Vert _{\Upsilon_{\overline{x}}}^{2} & \le2\kappa^{2}\left\Vert u^{\varepsilon,q+1}-u^{\varepsilon,q}\right\Vert _{\Upsilon_{\overline{x}}}^{2}+2\gamma^{2\sigma\overline{x}}\overline{x}\left(\log\left(\gamma\right)\left\Vert u_{y}^{\varepsilon,q}-u_{y}^{\varepsilon,q-1}\right\Vert _{\Upsilon_{\overline{x}}}^{2}+k^{2}\left\Vert u^{\varepsilon,q}-u^{\varepsilon,q-1}\right\Vert _{\Upsilon_{\overline{x}}}^{2}\right)e^{\overline{x}} \nonumber \\
		& \le4\overline{x}e^{\overline{x}}\gamma^{2\sigma\overline{x}}\left(\log\left(\gamma\right)\left\Vert u_{y}^{\varepsilon,q}-u_{y}^{\varepsilon,q-1}\right\Vert _{\Upsilon_{\overline{x}}}^{2}+k^{2}\left\Vert u^{\varepsilon,q}-u^{\varepsilon,q-1}\right\Vert _{\Upsilon_{\overline{x}}}^{2}\right)\nonumber \\
		& \le4\overline{x}e^{\overline{x}}\gamma^{2\sigma\overline{x}}\log\left(\gamma\right)\left(\left\Vert u_{y}^{\varepsilon,q}-u_{y}^{\varepsilon,q-1}\right\Vert _{\Upsilon_{\overline{x}}}^{2}+k\left\Vert u^{\varepsilon,q}-u^{\varepsilon,q-1}\right\Vert _{\Upsilon_{\overline{x}}}^{2}\right).
		\label{West3}
	\end{align}
	Therefore, we choose $\overline{x}$ small enough such that
	\begin{align}\label{eta}
	\eta_{\gamma}^{2}:=4\overline{x}e^{\overline{x}}\gamma^{2\sigma\overline{x}}\log\left(\gamma\right)<1.
	\end{align}
	Then, using the Minkowski inequality $(a+b)^2 \le 2(a^2 + b^2)$ for $a,b\in\mathbb{R}$ and triangle inequality, we deduce from \eqref{West2} that for $l\ge 1$,
	\begin{align*}
		&\left\Vert u_{y}^{\varepsilon,q+l}-u_{y}^{\varepsilon,q}\right\Vert _{\Upsilon_{\overline{x}}}  +\kappa\left\Vert u^{\varepsilon,q+l}-u^{\varepsilon,q}\right\Vert _{\Upsilon_{\overline{x}}}\\
		& \le\sum_{j=1}^{l}\left(\left\Vert u_{y}^{\varepsilon,q+j}-u_{y}^{\varepsilon,q+j-1}\right\Vert _{\Upsilon_{\overline{x}}}+\kappa\left\Vert u^{\varepsilon,q+j}-u^{\varepsilon,q+j-1}\right\Vert _{\Upsilon_{\overline{x}}}\right)\\
		& \le\sum_{j=1}^{l}\eta_{\gamma}^{q+j-1}\left(\left\Vert u_{y}^{\varepsilon,1}-u_{y}^{\varepsilon,0}\right\Vert _{\Upsilon_{\overline{x}}} + k\left\Vert u^{\varepsilon,1}-u^{\varepsilon,0}\right\Vert _{\Upsilon_{\overline{x}}} \right)=\frac{\eta_{\gamma}^{q}\left(1-\eta_{\gamma}^{l}\right)}{1-\eta_{\gamma}}\left(\left\Vert u_{y}^{\varepsilon,1}-u_{y}^{\varepsilon,0}\right\Vert _{\Upsilon_{\overline{x}}}+k\left\Vert u^{\varepsilon,1}-u^{\varepsilon,0}\right\Vert _{\Upsilon_{\overline{x}}}\right).
	\end{align*}
	Henceforth, $\left\{u^{\varepsilon,q}\right\}_{q\in\mathbb{N}}$ and $\left\{u_y^{\varepsilon,q}\right\}_{q\in\mathbb{N}}$ are Cauchy sequences in $\Upsilon_{\overline{x}}$, respectively. Thus, there exists uniquely $u^{\varepsilon}\in \Upsilon_{\overline{x}}$ such that $u^{\varepsilon,q}\to u^{\varepsilon}$ strongly in $\Upsilon_{\overline{x}}$ as $q\to \infty$. Similarly, we obtain a unique $u_{y}^{\varepsilon}\in \Upsilon_{\overline{x}}$ such that $u_{y}^{\varepsilon,q}\to u_{y}^{\varepsilon}$ strongly in $\Upsilon_{\overline{x}}$ as $q\to \infty$. By \eqref{West3}, we also obtain that $\left\{u_x^{\varepsilon,q}\right\}_{q\in\mathbb{N}}$ is a Cauchy sequence in $\Upsilon_{\overline{x}}$ and thus, there exists a unique limit $u_x^{\varepsilon}$ that converges strongly to $u_x^{\varepsilon,q}$ in $\Upsilon_{\overline{x}}$. Moreover, taking $l\to\infty $ we have
	\begin{align*}
		& \left\Vert u_{y}^{\varepsilon,q}-u_{y}^{\varepsilon}\right\Vert _{\Upsilon_{\overline{x}}}+\kappa\left\Vert u^{\varepsilon,q}-u^{\varepsilon}\right\Vert _{\Upsilon_{\overline{x}}}\le\frac{\eta_{\gamma}^{q}}{1-\eta_{\gamma}}\left(\left\Vert u_{y}^{\varepsilon,1}-u_{y}^{\varepsilon,0}\right\Vert _{\Upsilon_{\overline{x}}}+k\left\Vert u^{\varepsilon,1}-u^{\varepsilon,0}\right\Vert _{\Upsilon_{\overline{x}}}\right),\\
		& \left\Vert u_{x}^{\varepsilon,q}-u_{x}^{\varepsilon}\right\Vert _{\Upsilon_{\overline{x}}}\le\frac{\eta_{\gamma}^{q}}{1-\eta_{\gamma}}\left(\left\Vert u_{y}^{\varepsilon,1}-u_{y}^{\varepsilon,0}\right\Vert _{\Upsilon_{\overline{x}}}+k\left\Vert u^{\varepsilon,1}-u^{\varepsilon,0}\right\Vert _{\Upsilon_{\overline{x}}}\right).
	\end{align*}
	In addition, we obtain the strong convergence (as $q\to \infty$) $\mathbf{P}_{\varepsilon}u^{\varepsilon,q}$ in the following manner:
	\[
	\left\Vert \mathbf{P}_{\varepsilon}u^{\varepsilon,q}-\mathbf{P}_{\varepsilon}u^{\varepsilon}\right\Vert _{\Upsilon_{\overline{x}}}\le2\log\left(\gamma\right)\left\Vert u_{y}^{\varepsilon,q}-u_{y}^{\varepsilon}\right\Vert _{\Upsilon_{\overline{x}}}\le\frac{2\log\left(\gamma\right)\eta_{\gamma}^{q}}{1-\eta_{\gamma}}\left(\left\Vert u_{y}^{\varepsilon,1}-u_{y}^{\varepsilon,0}\right\Vert _{\Upsilon_{\overline{x}}}+k\left\Vert u^{\varepsilon,1}-u^{\varepsilon,0}\right\Vert _{\Upsilon_{\overline{x}}}\right).
	\]
	Hence, the limit $u^{\varepsilon}\in \Upsilon_{\overline{x}}$ found above is the solution of the regularized system \eqref{reguHelm} in the subinterval $[0,\overline{x}]$. We complete the proof of the theorem.
\end{proof}


\begin{remark}
	It is not hard to see that we do not really need to linearize the term $k^2 u$ on the right-hand side of \eqref{reguHelm}, while the convergence is still guaranteed from the theoretical standpoint.  However, numerical observations show that linearization of the term $k^2 u$ give better numerical results. This mainly explains why we choose the current linearization procedure.
\end{remark}

\section{Numerical examples}\label{sec:num}

\subsection{Finite difference settings}

Given $M,N\in\mathbb{N}$, we consider uniform grids of mesh-points $x_m = (m-1)\Delta x, y_n = (n-1)\Delta y$ for $1\le m\leq M+1, 1\le n\le N+1$ with  $\Delta x,\Delta y$ being the mesh-widths in $x$ and $y$, respectively. For any function $u(x,y)$, we denote by $u_{m,n}\approx u(x_m,y_n)$ the corresponding discrete function. To generate the data, we apply the central finite difference method (FDM) to solve the Helmholtz equation \eqref{helmholtz} with the Dirichlet boundary conditions imposed on four sides of $\Omega=(0,1)^2$, viz.
\begin{align}\label{boundHelm}
	u(0,y) = u_0(y),\quad u(1,y)= g(y),\quad u(x,0)=0,\quad u(x,1)=0.
\end{align}
In our numerical performance of the stabilization scheme below, we do not choose the true solution of the Helmholtz equation \eqref{helmholtz}. Instead, we choose its boundary data $u_0,g$ in \eqref{boundHelm} so that our choice is more flexible. This is relevant because  \eqref{helmholtz} with full data \eqref{boundHelm} is a well-posed problem and the central FDM is well known to be stable and convergent with respect to the refinement of $x$ and $y$. In this circumstance, one can consider the discrete function $u_{m,n}$ obtained from that well-posed problem as a reliable true solution. The Neumann data $u_1$ in \eqref{boundary1} can be generated using the fact that
\[
u_0(y_n)\approx u_1(y_n) \Delta x + u(x_2,y_n). 
\]
 
The same FDM is applied when we solve $U(x,y)$ of system \eqref{UU}. For ease of presentation, we only detail below this FDM for $U(x,y)$, while the scheme for $u$ can be established in the same manner. The center approximation for partial derivatives with respect to $x$ and $y$ is given by
\begin{align}\label{diffop}
	U_{xx}(x_m,y_n) \approx \dfrac{U_{m+1,n}-2U_{m,n}+U_{m-1,n}}{(\Delta x)^2},\quad 
	U_{yy}(x_m,y_n) \approx \dfrac{U_{m,n+1}-2U_{m,n}+U_{m,n-1}}{(\Delta y)^2},
\end{align}
Thus, the PDE in \eqref{UU} is discretized as follows:
\begin{align*}
	\dfrac{U_{m+1,n}-2U_{m,n}+U_{m-1,n}}{(\Delta x)^2}+\dfrac{U_{m,n+1}-2U_{m,n}+U_{m,n-1}}{(\Delta y)^2}+k^2U_{m,n} = 0.
\end{align*}
Put $r=\Delta x/\Delta y$. We obtain
\begin{align}\label{matrix1}
	U_{m+1,n}+U_{m-1,n}+r^2U_{m,n+1}+\left[\left(k\Delta x\right)^2-2-2r^2\right]U_{m,n}+r^2U_{m,n-1} &= 0.
\end{align}
Denote the unknown $\textbf{U}_m = \left(U_{m,2},U_{m,3},U_{m,4},\ldots,U_{m,N}\right)^\text{T}$. We rewrite \eqref{matrix1} in the following matrix form:
\begin{align*}
	\begin{bmatrix}
		\mathcal{K}_1 & \textbf{I}_{N-1} & \textbf{0} & \ldots & \textbf{0}\\
		\textbf{I}_{N-1} & \mathcal{K}_{2} & \textbf{I}_{N-1} & \ldots & \textbf{0}\\
		\textbf{0} & \textbf{I}_{N-1} & \mathcal{K}_{2} & \ldots & \textbf{0}\\
		\vdots & \vdots & \vdots & \ddots & \vdots\\
		\textbf{0} & \textbf{0} & \textbf{0} & \ldots & \mathcal{K}_{2}
	\end{bmatrix}
	\begin{bmatrix}
		\textbf{U}_2\\
		\textbf{U}_3\\
		\textbf{U}_4\\
		\vdots\\
		\textbf{U}_M
	\end{bmatrix} = \begin{bmatrix}
		\textbf{F}_2\\
		\textbf{F}_3\\
		\textbf{F}_4\\
		\vdots\\
		\textbf{F}_M
	\end{bmatrix},
\end{align*}
where $\mathbf{I}_{N-1}\in \mathbb{M}^{(N-1)\times(N-1)}$ stands for the identity matrix, the block matrices $\mathcal{K}_{1},\mathcal{K}_{2}\in\mathbb{M}^{(N-1)\times(N-1)}$ are defined with $T_k = \left(k\Delta x\right)^2-2-2r^2$, as follows:
\begin{align*}
	\mathcal{K}_1 = \begin{bmatrix}
		T_k+1 & r^2 & 0 & \ldots & 0\\
		r^2 & T_k+1 & r^2 & \ldots & 0\\
		0 & r^2 & T_k+1 & \ldots & 0\\
		\vdots & \vdots & \vdots & \ddots & \vdots\\
		0 & 0 & 0 & \ldots & T_k+1
	\end{bmatrix},\mathcal{K}_{2} = \begin{bmatrix}
		T_k & r^2 & 0 & \ldots & 0\\
		r^2 & T_k & r^2 & \ldots & 0\\
		0 & r^2 & T_k & \ldots & 0\\
		\vdots & \vdots & \vdots & \ddots & \vdots\\
		0 & 0 & 0 & \ldots & T_k
	\end{bmatrix},
\end{align*}
and vectors $\textbf{F}_m$ are denoted by
\begin{align*}
	\textbf{F}_2 = \Delta x\left(u_1(y_2),u_1(y_3),u_1(y_4),\ldots,u_1(y_{N})\right)^\text{T},\quad \textbf{F}_m = \left(0,0,0,\ldots,0\right)^\text{T},\quad 3\leq m\leq M.
\end{align*}

Solving \eqref{matrix1} allows us to find a numerical solution of $U(x,y)$ to system \eqref{UU}. Thereby, it follows that an approximation of $U(0,y)$ can be obtained for the Dirichlet data in \eqref{VV}. To solve for $V(x,y)$ in \eqref{VV} numerically, we accordingly apply the iterative scheme investigated in section \ref{sec:4}. That means we construct a sequence of $\left\{V^{\varepsilon,q}\right\}_{q\in\mathbb{N}}$ satisfying
\begin{align}
\begin{cases}\label{systemV}
	V_{xx}^{\varepsilon,q+1}-V_{yy}^{\varepsilon,q+1}+\mathbf{P}V^{\varepsilon,q}-k^{2}V^{\varepsilon,q}=0 & \text{in }\Omega,\\
	V^{\varepsilon,q+1}\left(x,0\right)=V^{\varepsilon,q+1}(x,1)=0 & \text{for }x\in\left(0,1\right),\\
	V^{\varepsilon,q+1}\left(0,y\right)=u_{0}^{\varepsilon}\left(y\right)-U\left(0,y\right),\quad V_{x}^{\varepsilon,q+1}\left(0,y\right)=0 & \text{for }y\in\left(0,1\right).
\end{cases}
\end{align}
In \eqref{systemV}, we recall that
\begin{align*}
	\mathbf{P}u(x,\cdot) = -2\sum_{j\in\mathbb{N}\backslash (B\cup A_3)}\lambda_{j,k}\left\langle u(x,\cdot),\phi_{j}\right\rangle \phi_{j} = -2\sum_{j\in\mathbb{N}\backslash(B\cup A_3)}\left(\mu_j-k^2\right)\left\langle u(x,\cdot),\phi_j\right\rangle\phi_j,
\end{align*}
and the initial guess $V^{\varepsilon,0}$ (i.e. $q=0$) is chosen to be $u_0^{\varepsilon}(y) - U(0,y)$. As mentioned in section \ref{sec:4}, we choose this initial guess because it is a unique function that contains many information of our sought $V^{\varepsilon}$ under stabilization. Let $V^{\varepsilon,q}(x_i,y_j) \approx V^{\varepsilon,q}_{i,j}$, and the same difference operators in \eqref{diffop} are applied to the PDE of \eqref{systemV}. It yields that
\begin{align*}
	V_{xx}^{\varepsilon,q+1}(x_m,y_n) \approx \dfrac{V_{m+1,n}^{\varepsilon,q+1}-2V_{m,n}^{\varepsilon,q+1}+V_{m-1,n}^{\varepsilon,q+1}}{(\Delta x)^2},\quad
	V_{yy}^{\varepsilon,q+1}(x_m,y_n) \approx \dfrac{V_{m,n+1}^{\varepsilon,q+1}-2V_{m,n}^{\varepsilon,q+1}+V_{m,n-1}^{\varepsilon,q+1}}{(\Delta y)^2}.
\end{align*}
Combining these with the standard Riemann sum approximating the inner product in $\mathbf{P}$, we seek $V_{m,n}^{\varepsilon,q}$ satisfying the following approximate equation:
\begin{align*}
	\dfrac{V_{m+1,n}^{\varepsilon,q+1}-2V_{m,n}^{\varepsilon,q+1}+V_{m-1,n}^{\varepsilon,q+1}}{(\Delta x)^2}&-\dfrac{V_{m,n+1}^{\varepsilon,q+1}-2V_{m,n}^{\varepsilon,q+1}+V_{m,n-1}^{\varepsilon,q+1}}{(\Delta y)^2}\\
	&-2\Delta y\sum_{j\in\mathbb{N}\backslash(B\cup A_3)}\left(\mu_j-k^2\right)\sum_{l=1}^{N+1}V^{\varepsilon,q}_{m,l}\phi_j(y_l)\phi_j(y_n) = k^2V^{\varepsilon,q}_{m,n}.
\end{align*}
Recall that $r=\Delta x/ \Delta y$. We get
\begin{align*}
	V_{m+1,n}^{\varepsilon,q+1} &= r^2V_{m,n+1}^{\varepsilon,q+1}+\left(2-2r^2\right)V_{m,n}^{\varepsilon,q+1}+r^2V_{m,n-1}^{\varepsilon,q+1}-V_{m-1,n}^{\varepsilon,q+1}\\
	&+2(\Delta x)^2\Delta y\sum_{j\in\mathbb{N}\backslash(B\cup A_3)}\left(\mu_j-k^2\right)\sum_{l=1}^{N+1}V^{\varepsilon,q}_{m,l}\phi_j(y_l)\phi_j(y_n)+k^2(\Delta x)^2V_{m,n}^{\varepsilon,q}.
\end{align*}
Let $\textbf{V}_m^{\varepsilon,q}=\left(V_{m,2}^{\varepsilon,q},V_{m,3}^{\varepsilon,q},V_{m,4}^{\varepsilon,q},\ldots,V_{m,N}^{\varepsilon,q}\right)^\text{T}$. The above equation can be rewritten in the following matrix form:
\begin{align}\label{matrix2}
	\textbf{V}_{m+1}^{\varepsilon,q+1} = \textbf{K}\textbf{V}_{m}^{\varepsilon,q+1}-\textbf{V}_{m-1}^{\varepsilon,q+1}+\textbf{f}(\textbf{V}_m^{\varepsilon,q})+k^2(\Delta x)^2\textbf{V}_m^{\varepsilon,q},
\end{align}
where we have denoted by
\begin{align*}
	\textbf{K}=\begin{bmatrix}
		2-2r^2 & r^2 & 0 & \ldots & 0\\
		r^2 & 2-2r^2 & r^2 & \ldots & 0\\
		0 & r^2 & 2-2r^2 & \ldots & 0\\
		\vdots & \vdots & \vdots & \ddots & \vdots\\
		0 & 0 & 0 & \ldots & 2-2r^2
	\end{bmatrix},\quad
	\textbf{f}(\textbf{V}_m^{\varepsilon,q}) = \begin{bmatrix}
		\textbf{f}\left(\textbf{V}_m^{\varepsilon,q}\right)\left(y_2\right)\\
		\textbf{f}\left(\textbf{V}_m^{\varepsilon,q}\right)\left(y_3\right)\\
		\textbf{f}\left(\textbf{V}_m^{\varepsilon,q}\right)\left(y_4\right)\\
		\vdots\\
		\textbf{f}\left(\textbf{V}_m^{\varepsilon,q}\right)\left(y_N\right)
	\end{bmatrix}.
\end{align*}
Herewith, elements in $\textbf{f}(\textbf{V}_m^{\varepsilon,q})$ are understood as
\begin{align*}
	\textbf{f}\left(\textbf{V}_m^{\varepsilon,q}\right)\left(y_n\right) &= 2(\Delta x)^2\Delta y\sum_{\frac{k^2}{\pi^2}\leq j^2\leq \frac{k^2+\log^2(\gamma)}{\pi^2}}\left(\mu_j-k^2\right)\sum_{l=1}^{N+1}V^{\varepsilon,q}_{m,l}\phi_j(y_l)\phi_j(y_n)\\
	&= 2(\Delta x)^2\Delta y\sum_{\frac{k^2}{\pi^2}\leq j^2\leq \frac{k^2+\log^2(\gamma)}{\pi^2}}\left(\mu_j-k^2\right)\begin{bmatrix}
		\phi_j(y_2)\\
		\phi_j(y_3)\\
		\phi_j(y_4)\\
		\vdots\\
		\phi_j(y_N)
	\end{bmatrix}^\text{T}\begin{bmatrix}
		V_{m,2}^{\varepsilon,q}\\
		V_{m,3}^{\varepsilon,q}\\
		V_{m,4}^{\varepsilon,q}\\
		\vdots\\
		V_{m,N}^{\varepsilon,q}\\
	\end{bmatrix}\phi_j(y_n).
\end{align*}

After having $V^{\varepsilon,q}$ from \eqref{matrix2}, we obtain an approximation of $u^{\varepsilon}$ via $u^{\varepsilon}_{m,n} = U_{m,n} + V^{\varepsilon,q}_{m,n}$. As to the measured data $u_{0}^{\varepsilon}$ in \eqref{systemV}, we apply the additive noise in the following sense: $u_{0}^{\varepsilon}(y) = u_0(y) + \varepsilon\text{rand}(y)$, where rand is a uniformly distributed random number such that $\max_{y\in[0,1]}\left|\text{rand}(y)\right|\le 1/(2N)$. At the discretization level, the gradient of $u_{0}^{\varepsilon}$ is then approximated by
\[
\partial_y u_{0}^{\varepsilon}(y_n) \approx \frac{u_0^{\varepsilon}(y_{n+1})-u_0^{\varepsilon}(y_{n})}{\Delta y} \approx \partial_y u_{0} (y_n) + \varepsilon N \left(\text{rand}(y_{n+1})-\text{rand}(y_{n})\right).
\]
Therefore, we can see that assumption \eqref{noisecondition} is fulfilled. The (local) convergence of the linerization scheme for \eqref{systemV} has been studied in section \ref{sec:4}. In this regard, we, according to \eqref{eta}, condition that
\begin{align}\label{eta1}
	\eta_{\varepsilon}^2=4\Delta xe^{\Delta x}\varepsilon^{-2\alpha\Delta x}\log\left(\varepsilon^{-\alpha}\right)< 1
\end{align}
indicating $\sigma = 1$ is taken. Henceforth, a suitable fine mesh for variable $x$ should be applied. Below, we fix $\alpha = 1$ and $N=40$ when enjoying the numerical performance of the QR scheme for different noise levels. Moreover, we choose $q = 1$ in our iterative procedure for the QR scheme. The choice of $M$ will be specified in each example since cf. \eqref{eta1}, it depends on values of $\varepsilon$. Also, for simplicity, we take $g(y)=0$ for all examples below, while varying $u_0(y)$ in \eqref{boundHelm}. Last but not least, we below consider the following relative error:
\[
E=\frac{\sqrt{\sum_{m=0}^{M}\sum_{n=0}^{N}\left|u_{m,n}^{\varepsilon}-u_{m,n}\right|^{2}}}{\sqrt{\sum_{m=0}^{M}\sum_{n=0}^{N}\left|u_{m,n}\right|^{2}}}\times100\%.
\]

\subsection{Numerical performance for variable noise levels}

\subsection*{Example 1: Low frequency} 

\begin{figure}
	\subfloat[True\label{fig:1a}]{\includegraphics[scale=0.3]{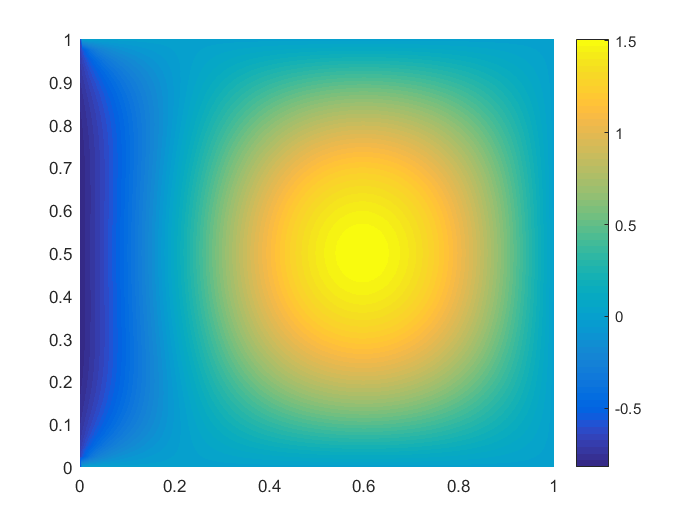}}
	\subfloat[Computed ($\varepsilon=10^{-1}$)\label{fig:1b}]{\includegraphics[scale=0.3]{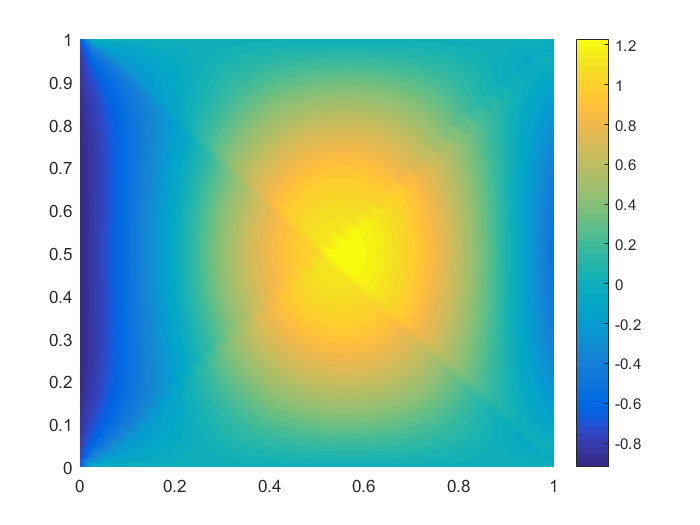}}
	\subfloat[Computed ($\varepsilon=10^{-2}$)\label{fig:1c}]{\includegraphics[scale=0.3]{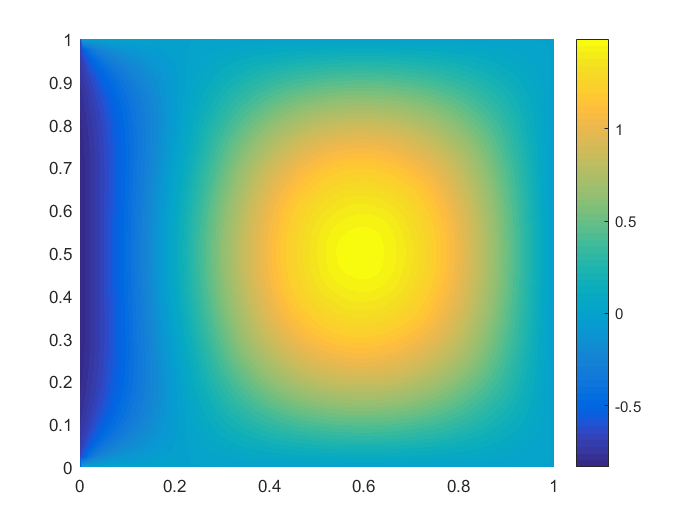}}
	
	\caption{Numerical results of Example 1 (low-frequency problem). (a) Graphical illustration of the true solution with $M=80$ and $N=40$. (b) and (c) Illustrations of the reconstructed solution with $\varepsilon = 10^{-1}$ and $\varepsilon = 10^{-2}$, respectively.\label{fig:1}}
	
\end{figure}

We begin this section by a numerical example with a low frequency profile. In this test, we particularly choose $k=5$ and
\[
u_{0}(y) = -e^{-2\left(0.5^4 + (y - 0.5)^4\right)} + 0.5^4 + (y - 0.5)^4.
\]
Such $k$ is suitable in the context of landmine detection; cf. e.g. \cite{Khoa2020}. In this low frequency profile, we observe numerically that the scheme works well with intermediate noise levels. Therefore, in this test the numerical results are taken into account with $\varepsilon = 10^{-1}$ and $10^{-2}$. Note that cf. \eqref{eta1}, $\eta_{\varepsilon}^2$ increases when $\varepsilon$ decreases, and it decreases when $M$ becomes larger. Henceforth, for our comparison purpose, to keep $\eta_{\varepsilon}$ unchanged  when decreasing $\varepsilon$, we need different values of $M$. In particular, when $\varepsilon=10^{-1}$, we take $M=40$, which gives $\eta_{\varepsilon}^2\approx 0.26$. When $\varepsilon= 10^{-2}$, we choose $M=80$.

Depicted in Figure \ref{fig:1} are the graphical illustrations of the true solution and its reconstructed with intermediate noise ($\varepsilon=10^{-1}$) and small noise ($\varepsilon = 10^{-2}$). When $\varepsilon$ is smaller, the computed solution is very close to the true one in terms of the value and, furthermore, the shape and location of the yellow circular protrusion; see Figures \ref{fig:1a} and \ref{fig:1c}. On the other hand, the relative error reduces from 34.703\% for $\varepsilon = 10^{-1}$ to 3.481\% for $\varepsilon = 10^{-2}$, which shows that the regularized solution obtained from solving \eqref{systemV} approximates well the true solution in this low frequency profile.

\subsection*{Example 2: Intermediate frequency}

\begin{figure}
	\subfloat[True\label{fig:3a}]{\includegraphics[scale=0.125]{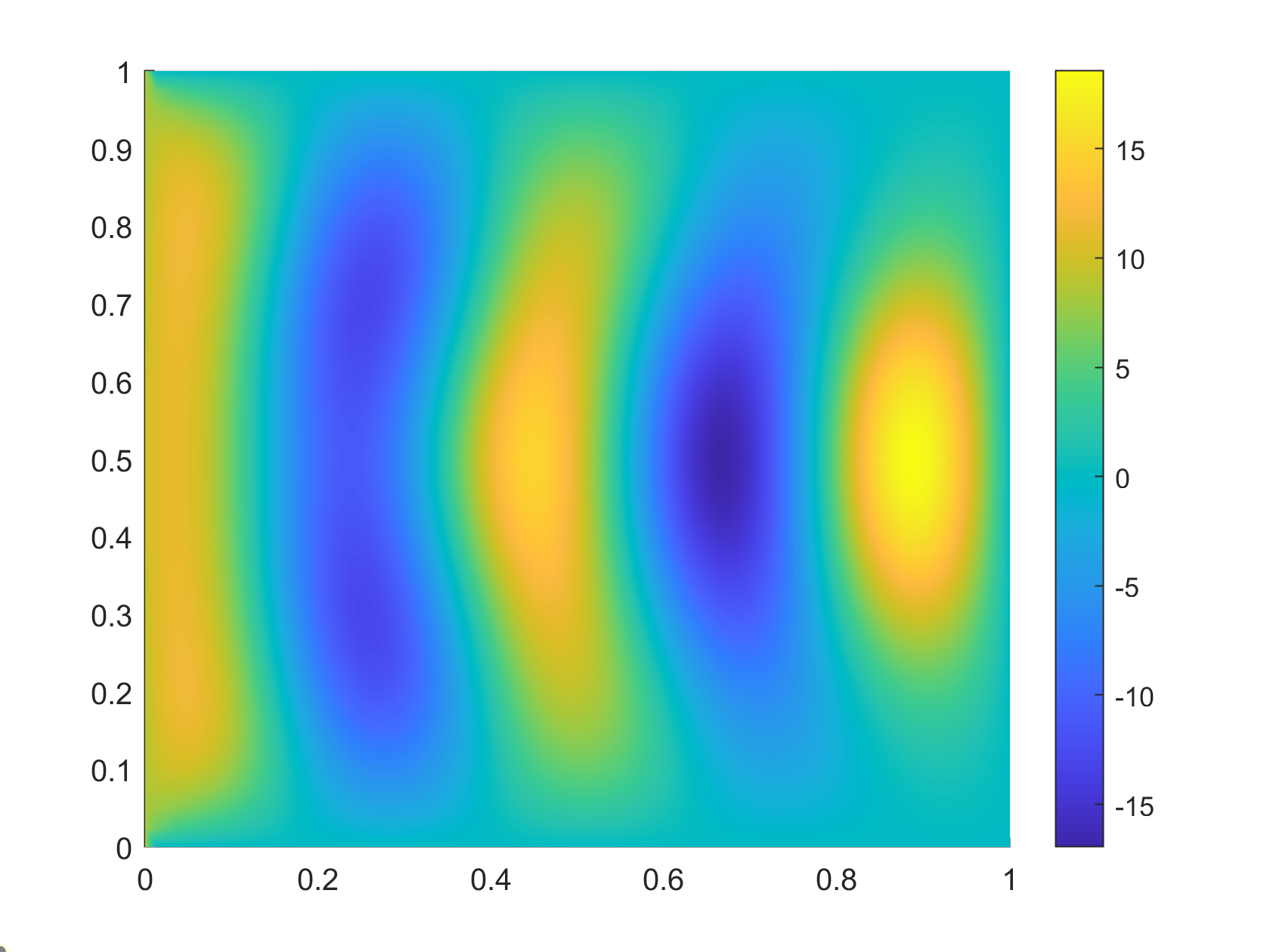}}
	\subfloat[Computed ($\varepsilon=10^{-1}$)\label{fig:3b}]{\includegraphics[scale=0.125]{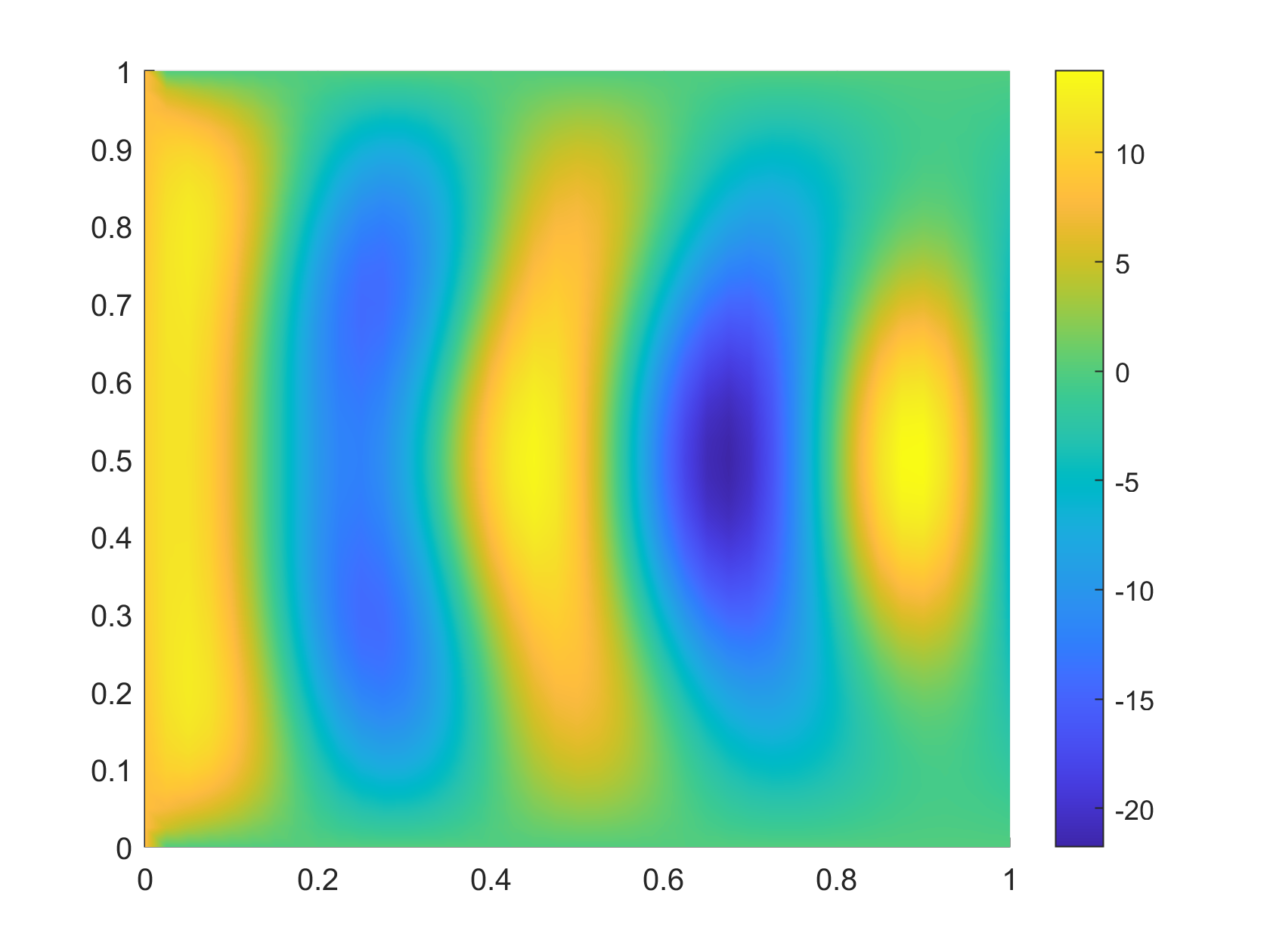}}
	\subfloat[Computed ($\varepsilon=10^{-2}$)\label{fig:3c}]{\includegraphics[scale=0.125]{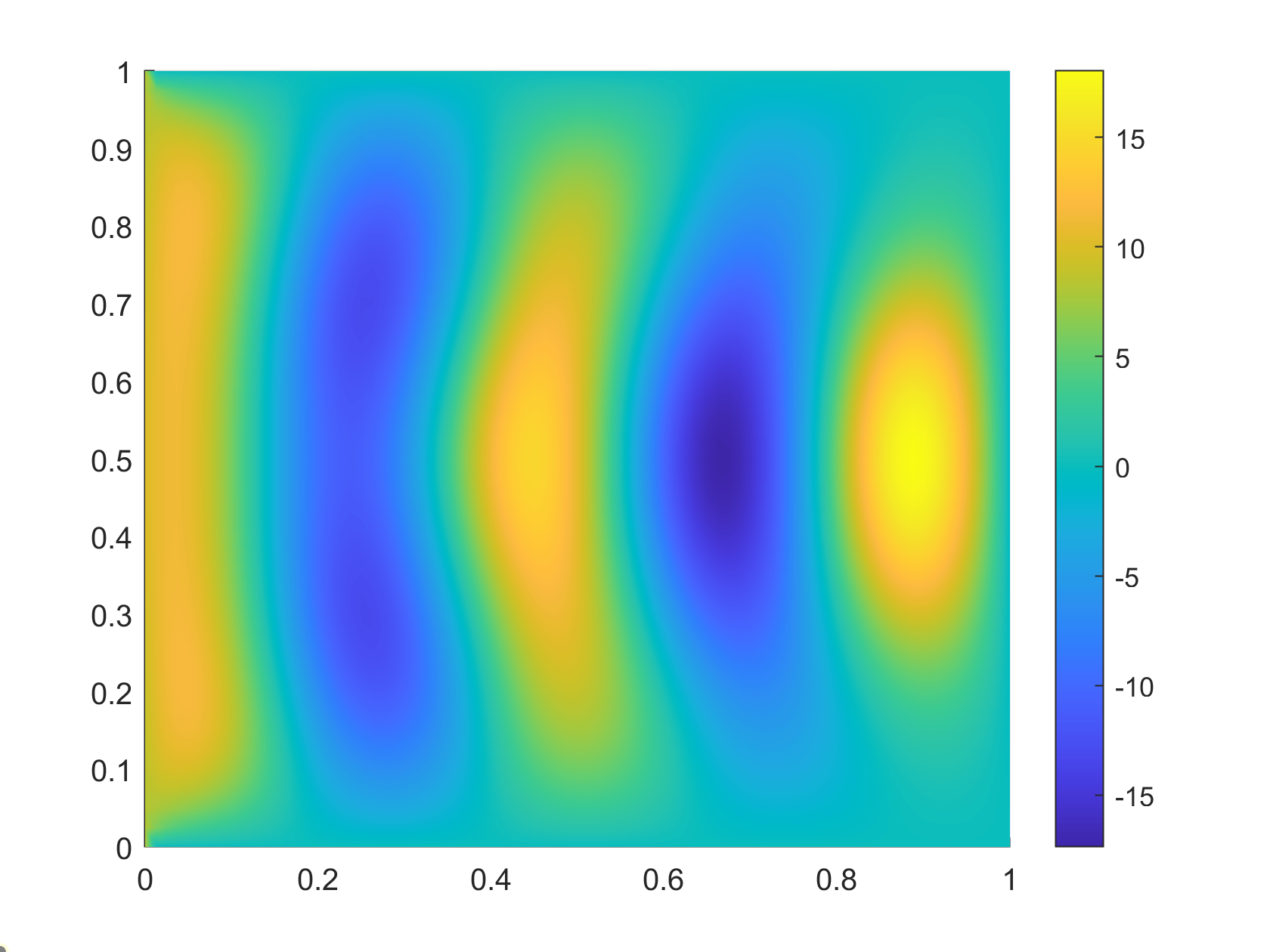}}
	
	\caption{Numerical results of Example 2 (intermediate-frequency problem). (a) Graphical illustration of the true solution with $M=80$ and $N=40$. (b) and (c) Illustrations of the reconstructed solution with $\varepsilon = 10^{-1}$ and $\varepsilon = 10^{-2}$, respectively.\label{fig:3}}
	
\end{figure}

In this test, we take into account an intermediate frequency problem with $k=15$. We choose that
\[
u_0(y) = \frac{1}{0.1 + 0.1(y-0.5)^2}.
\]
We verify the numerical performance of the iterative QR scheme with $\varepsilon = 10^{-1}$ and $\varepsilon=10^{-2}$. For each $\varepsilon$, we use the same parameters as taken in Example 1. Similar to the previous example, we observe numerically that the scheme reconstructs well the inclusions inside of the computational domain. The true solution and the reconstructed ones with  $\varepsilon = 10^{-1}$ and $\varepsilon=10^{-2}$ are reported in Figure \ref{fig:3}. Graphically, all yellow and blue inclusions are visible in Figure \ref{fig:3b} when the reconstruction is proceeded with $\varepsilon = 10^{-1}$ -- an intermediate noise. Their locations are also quite accurate, while only the values should be improved. Taking $\varepsilon$ smaller ($\varepsilon = 10^{-2}$), we can see the values in Figure \ref{fig:3c} are very close to the true ones in Figure \ref{fig:3a}. We also report that the relative error in this test reduces from 30.614\% (for $\varepsilon = 10^{-1}$) to 3.205\% (for $\varepsilon = 10^{-2}$).

\subsection*{Example 3: High frequency}

\begin{figure}
	\subfloat[True\label{fig:2a}]{\includegraphics[scale=0.3]{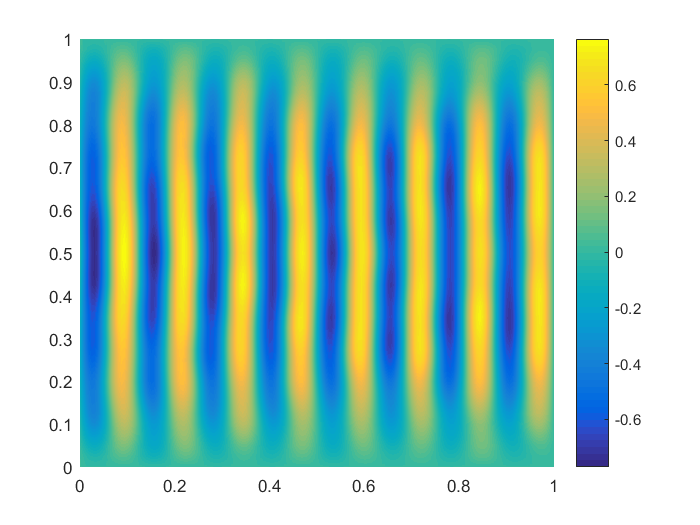}}
	\subfloat[Computed ($\varepsilon=10^{-2}$)\label{fig:2b}]{\includegraphics[scale=0.3]{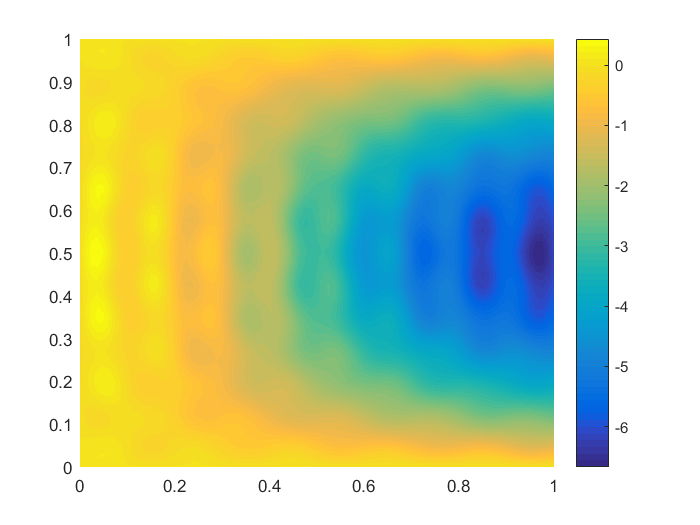}}
	\subfloat[Computed ($\varepsilon = 10^{-4}$)\label{fig:2c}]{\includegraphics[scale=0.3]{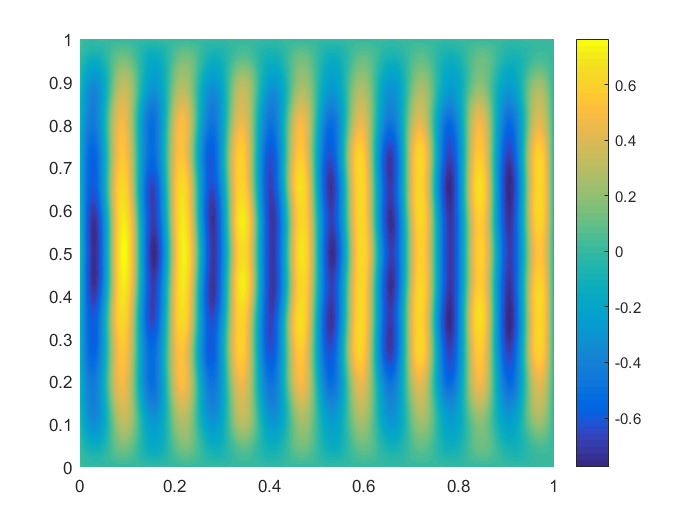}}
	
	\caption{Numerical results of Example 3 (high-frequency problem). (a) Graphical illustration of the true solution with $M=160$ and $N=40$. (b) and (c) Illustrations of the reconstructed solution with $\varepsilon = 10^{-2}$ and $\varepsilon = 10^{-4}$, respectively.\label{fig:2}}
	
\end{figure}

In this test, we consider a high frequency problem with $k = 50$ and
\[
u_0(y) = \frac{-\sin\left(7\sqrt{0.001 + (y-0.5)^2}\right)}{7\sqrt{1 + (y-0.5)^2}}.
\]
High frequency problems are usually challenging. Our numerical results for the well-posed problem \eqref{UU} of $U$ report that $M$ should be large enough for better resolution. In the regularized problem \eqref{systemV}, this also corresponds to choosing smaller values of $\varepsilon$. Thus, in this test, we report our numerical results with $\varepsilon=10^{-2}$ and $\varepsilon = 10^{-4}$. When $\varepsilon=10^{-2}$, we illustrate the reconstructed solution with $M=80$. When $\varepsilon = 10^{-4}$, we take $M=160$. Doing so ensures the same value of $\eta_{\varepsilon}^{2}$ discussed in the previous example.

Similar to Examples 1 and 2, we can see the reconstruction becomes better when $\varepsilon$ decreases, in this case, from $10^{-2}$ to $10^{-4}$; see Figure \ref{fig:2}. Especially, when $\varepsilon = 10^{-4}$, the computed solution, cf. Figure \ref{fig:2c}, shows exactly the same shape and location of all yellow bands in the true solution (Figure \ref{fig:2}). As can be seen from Figure \ref{fig:2b} for $\varepsilon=10^{-2}$, those bands are not even visible, and the value of the computed solution still undergoes the blow-up phenomenon due to the natural Hadamard instability. This graphical observation is not captured well in Example 1; see Figures \ref{fig:1b} and \ref{fig:1c}. This also explains why regularization of high frequency problems is rather challenging. We finally report that the relative error $E$ in this case reduces significantly from 1687.3\% to 7.212\%, when $\varepsilon$ decreases from $10^{-2}$ to $10^{-4}$.

\subsection*{Example 4: Extremely high frequency}

\begin{figure}
	\subfloat[True\label{fig:4a}]{\includegraphics[scale=0.125]{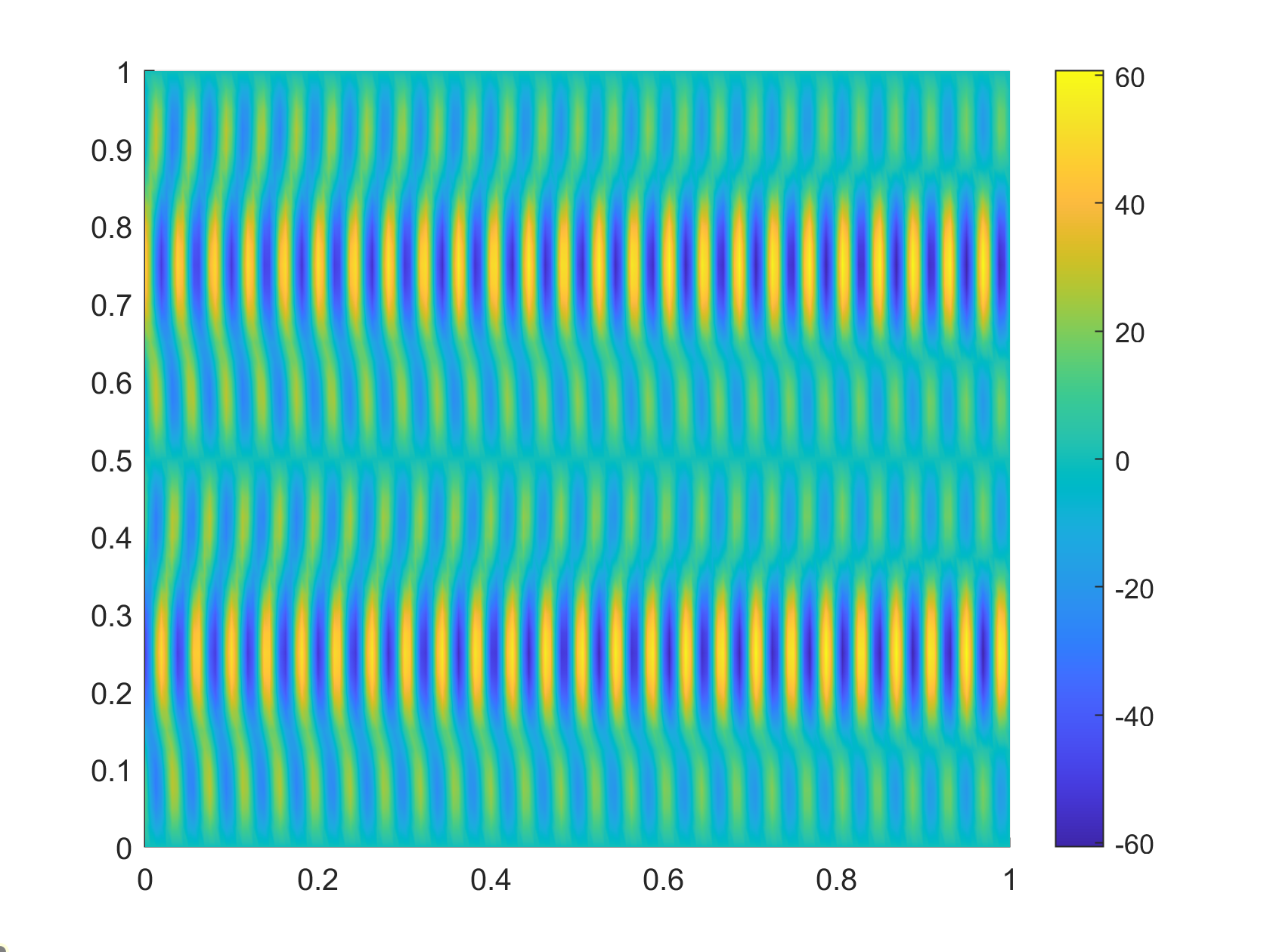}}
	\subfloat[Computed ($\varepsilon=10^{-2}$)\label{fig:4b}]{\includegraphics[scale=0.125]{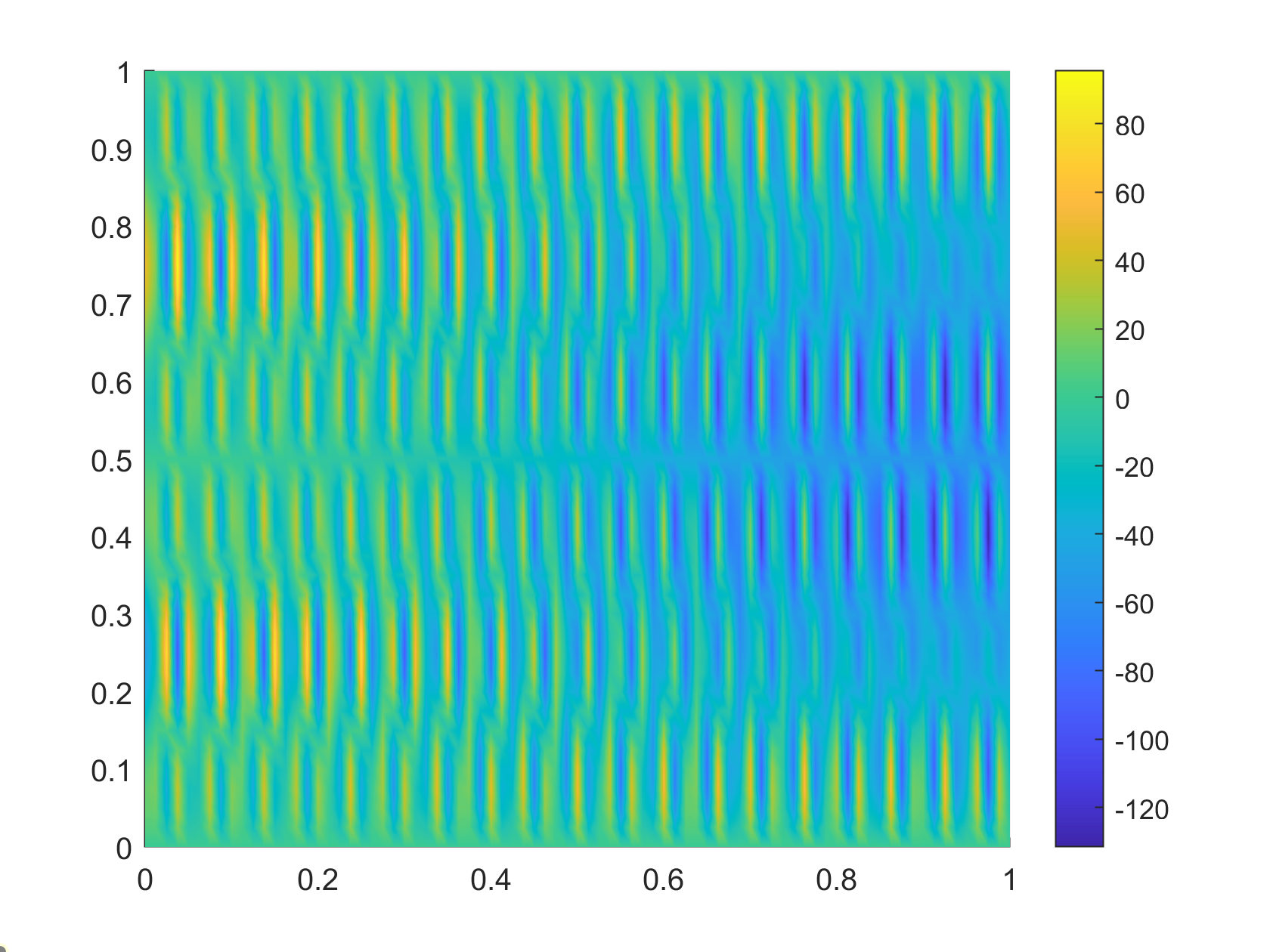}}
	\subfloat[Computed ($\varepsilon = 10^{-4}$)\label{fig:4c}]{\includegraphics[scale=0.125]{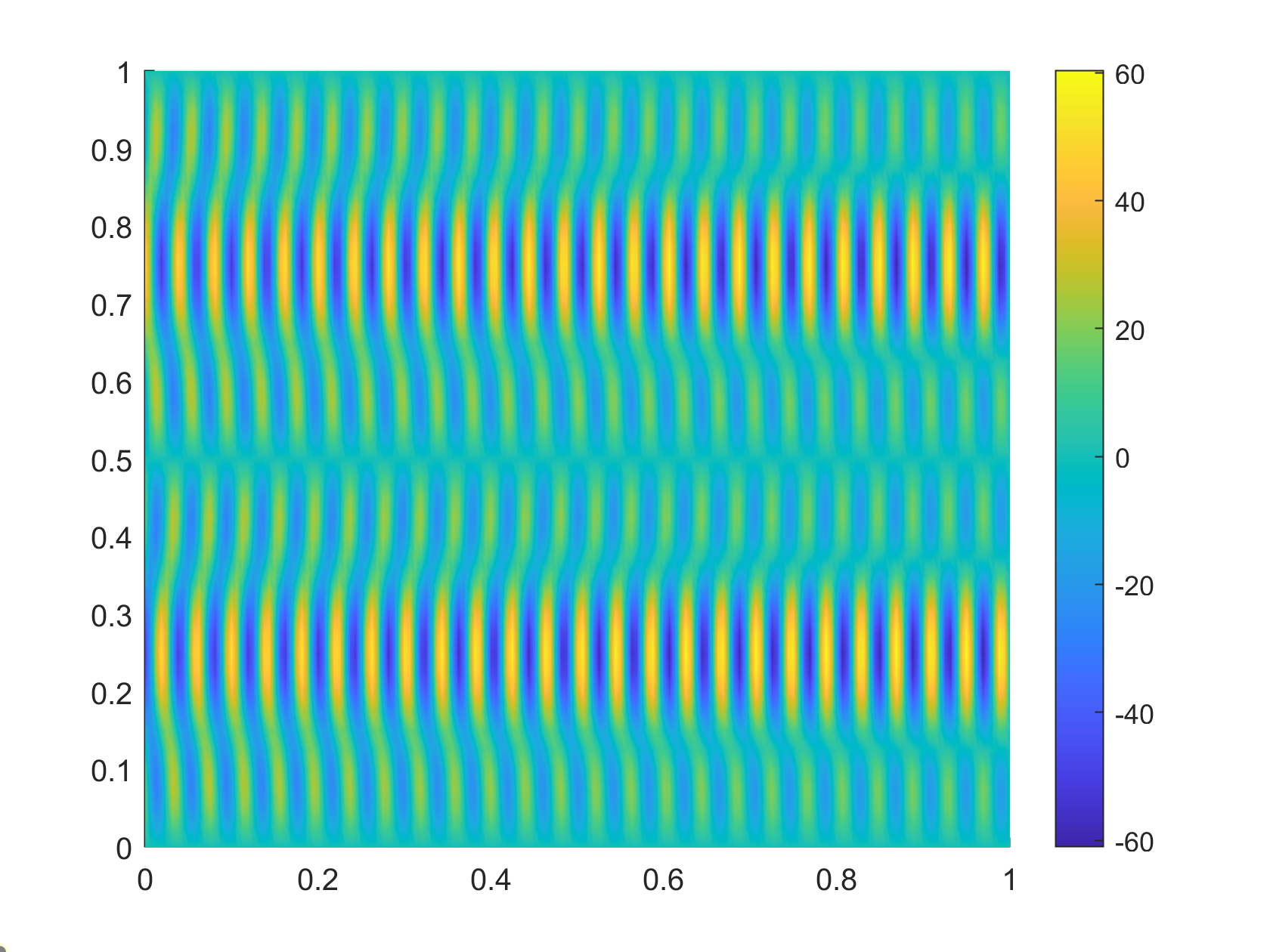}}
	
	\caption{Numerical results of Example 4 (extremely high-frequency problem). (a) Graphical illustration of the true solution with $M=160$ and $N=40$. (b) and (c) Illustrations of the reconstructed solution with $\varepsilon = 10^{-2}$ and $\varepsilon = 10^{-4}$, respectively.\label{fig:4}}
	
\end{figure}

In this last numerical example, we would like to see the performance of the scheme with a very large frequency. In particular, we choose $k=150$ and
\[
u_0(y) = 50\sin(2\pi y)\cos(4\pi y).
\]
Similar to Example 3, we obverse numerically that the scheme works with small noise levels. In this test, we verify the scheme when $\varepsilon = 10^{-2}$ and $10^{-4}$, and the same parameters are taken as in Example 3. In Figure \ref{fig:4b}, our reconstruction when $\varepsilon = 10^{-2}$ shows a slight accuracy in terms of shape and location of yellow bands within $x\in (0,0.2)$. When being far away from the ``initial'' data $u_0$, the reconstructed solution is pretty much inaccurate. This causes a huge relative error of 70.731\%. When $\varepsilon$ is down to $\varepsilon=10^{-4}$, this error reduces substantially to 1.153\%. The fine accuracy of the computed solution with $\varepsilon=10^{-4}$ can also be seen in Figure \ref{fig:4c}, compared with the true one in Figure \ref{fig:4a}. Note that as compared to the previous examples reconstructing a few inclusions, this last example indicates the efficiency of the method. It is, in fact, challenging if one wants to reconstruct several inclusions.

\section*{Acknowledgment}
V.A.K. thanks Prof. Dr. Roselyn Williams (Tallahassee, USA) for her support during the time V.A.K working at Florida A\&M University. N.D.T. acknowledges Dr. Nguyen Thanh Long for the wholehearted guidance during his study at University of Science, and thanks Mr. Pham Truong Hoang Nhan for helpful discussions.

\section*{Declarations}
The authors declare no competing interests.

\bibliography{bibtex}

\begin{thebibliography}{10}
\expandafter\ifx\csname url\endcsname\relax
  \def\url#1{\texttt{#1}}\fi
\expandafter\ifx\csname urlprefix\endcsname\relax\def\urlprefix{URL }\fi
\expandafter\ifx\csname href\endcsname\relax
  \def\href#1#2{#2} \def\path#1{#1}\fi

\bibitem{MaxBorn2019}
M.~Born, E.~Wolf, Principles of {O}ptics, Cambridge University Press, 2019.

\bibitem{Tuan2017}
N.~H. Tuan, V.~A. Khoa, M.~N. Minh, T.~Tran, Reconstruction of the electric
  field of the {H}elmholtz equation in three dimensions, Journal of
  Computational and Applied Mathematics 309 (2017) 56--78.
\newblock \href {https://doi.org/10.1016/j.cam.2016.05.021}
  {\path{doi:10.1016/j.cam.2016.05.021}}.

\bibitem{Klibanov2019a}
M.~V. Klibanov, D.-L. Nguyen, L.~H. Nguyen, A coefficient inverse problem with
  a single measurement of phaseless scattering data, {SIAM} Journal on Applied
  Mathematics 79~(1) (2019) 1--27.
\newblock \href {https://doi.org/10.1137/18m1168303}
  {\path{doi:10.1137/18m1168303}}.

\bibitem{Karimi2022}
M.~Karimi, Regularization of ill-posed problems involving constant-coefficient
  pseudo-differential operators, Inverse Problems 38~(5) (2022) 055001.
\newblock \href {https://doi.org/10.1088/1361-6420/ac5ac8}
  {\path{doi:10.1088/1361-6420/ac5ac8}}.

\bibitem{Leitao2000}
A.~Leit{\~{a}}o, An iterative method for solving elliptic cauchy problems,
  Numerical Functional Analysis and Optimization 21~(5-6) (2000) 715--742.
\newblock \href {https://doi.org/10.1080/01630560008816982}
  {\path{doi:10.1080/01630560008816982}}.

\bibitem{Qian2008}
Z.~Qian, C.-L. Fu, Z.-P. Li, Two regularization methods for a {C}auchy problem
  for the {L}aplace equation, Journal of Mathematical Analysis and Applications
  338~(1) (2008) 479--489.
\newblock \href {https://doi.org/10.1016/j.jmaa.2007.05.040}
  {\path{doi:10.1016/j.jmaa.2007.05.040}}.

\bibitem{Tuan2010a}
N.~H. Tuan, D.~D. Trong, P.~H. Quan, A note on a {C}auchy problem for the
  {L}aplace equation: {R}egularization and error estimates, Applied Mathematics
  and Computation 217~(7) (2010) 2913--2922.
\newblock \href {https://doi.org/10.1016/j.amc.2010.09.019}
  {\path{doi:10.1016/j.amc.2010.09.019}}.

\bibitem{Elden2009}
L.~Eld{\'{e}}n, V.~Simoncini, A numerical solution of a {C}auchy problem for an
  elliptic equation by {K}rylov subspaces, Inverse Problems 25~(6) (2009)
  065002.
\newblock \href {https://doi.org/10.1088/0266-5611/25/6/065002}
  {\path{doi:10.1088/0266-5611/25/6/065002}}.

\bibitem{Hao2000}
D.~N. H\`ao, D.~Lesnic, The {C}auchy problem for laplace's equation via the
  conjugate gradient method, {IMA} Journal of Applied Mathematics 65~(2) (2000)
  199--217.
\newblock \href {https://doi.org/10.1093/imamat/65.2.199}
  {\path{doi:10.1093/imamat/65.2.199}}.

\bibitem{Klibanov2015}
M.~V. Klibanov, Carleman estimates for the regularization of ill-posed {C}auchy
  problems, Applied Numerical Mathematics 94 (2015) 46--74.
\newblock \href {https://doi.org/10.1016/j.apnum.2015.02.003}
  {\path{doi:10.1016/j.apnum.2015.02.003}}.

\bibitem{Reinhardt1999}
H.-J. Reinhardt, H.~Han, D.~N. H\`ao, Stability and regularization of a
  discrete approximation to the {C}auchy problem for {L}aplace's equation,
  {SIAM} Journal on Numerical Analysis 36~(3) (1999) 890--905.
\newblock \href {https://doi.org/10.1137/s0036142997316955}
  {\path{doi:10.1137/s0036142997316955}}.

\bibitem{Falk1986}
R.~S. Falk, P.~B. Monk, Logarithmic convexity for discrete harmonic functions
  and the approximation of the {C}auchy problem for {P}oisson's equation,
  Mathematics of Computation 47~(175) (1986) 135.
\newblock \href {https://doi.org/10.2307/2008085} {\path{doi:10.2307/2008085}}.

\bibitem{Elden2005}
L.~Eld\'en, F.~Berntsson, A stability estimate for a {C}auchy problem for an
  elliptic partial differential equation, Inverse Problems 21~(5) (2005)
  1643--1653.
\newblock \href {https://doi.org/10.1088/0266-5611/21/5/008}
  {\path{doi:10.1088/0266-5611/21/5/008}}.

\bibitem{Karimi2017}
M.~Karimi, A.~Rezaee, Regularization of the {C}auchy problem for the
  {H}elmholtz equation by using {M}eyer wavelet, Journal of Computational and
  Applied Mathematics 320 (2017) 76--95.
\newblock \href {https://doi.org/10.1016/j.cam.2017.02.005}
  {\path{doi:10.1016/j.cam.2017.02.005}}.

\bibitem{Qiu2008}
C.-Y. Qiu, C.-L. Fu, Wavelets and regularization of the {C}auchy problem for
  the {L}aplace equation, Journal of Mathematical Analysis and Applications
  338~(2) (2008) 1440--1447.
\newblock \href {https://doi.org/10.1016/j.jmaa.2007.06.035}
  {\path{doi:10.1016/j.jmaa.2007.06.035}}.

\bibitem{LL67}
R.~Latt\`es, J.~L. Lions, M\'ethode de {Q}uasi-r\'eversibilit\'e et
  {A}pplications, Dunod, Paris, 1967.

\bibitem{Nguyen2019}
H.~T. Nguyen, V.~A. Khoa, V.~A. Vo, Analysis of a quasi-reversibility method
  for a terminal value quasi-linear parabolic problem with measurements, {SIAM}
  Journal on Mathematical Analysis 51~(1) (2019) 60--85.
\newblock \href {https://doi.org/10.1137/18m1174064}
  {\path{doi:10.1137/18m1174064}}.

\bibitem{Khoa2020a}
V.~A. Khoa, P.~T.~H. Nhan, Constructing a variational quasi-reversibility
  method for a {C}auchy problem for elliptic equations, Mathematical Methods in
  the Applied Sciences 44~(5) (2020) 3334--3355.
\newblock \href {https://doi.org/10.1002/mma.6945}
  {\path{doi:10.1002/mma.6945}}.

\bibitem{Klibanov2013}
M.~V. Klibanov, Carleman estimates for global uniqueness, stability and
  numerical methods for coefficient inverse problems, Journal of Inverse and
  Ill-Posed Problems 21~(4) (2013).
\newblock \href {https://doi.org/10.1515/jip-2012-0072}
  {\path{doi:10.1515/jip-2012-0072}}.

\bibitem{Khoa2020}
V.~A. Khoa, G.~W. Bidney, M.~V. Klibanov, L.~H. Nguyen, L.~H. Nguyen, A.~J.
  Sullivan, V.~N. Astratov, Convexification and experimental data for a 3{D}
  inverse scattering problem with the moving point source, Inverse Problems
  36~(8) (2020) 085007.
\newblock \href {https://doi.org/10.1088/1361-6420/ab95aa}
  {\path{doi:10.1088/1361-6420/ab95aa}}.

\bibitem{Klibanov2022}
M.~Klibanov, L.~H. Nguyen, H.~V. Tran, Numerical viscosity solutions to
  {H}amilton-{J}acobi equations via a {C}arleman estimate and the
  convexification method, Journal of Computational Physics 451 (2022) 110828.
\newblock \href {https://doi.org/10.1016/j.jcp.2021.110828}
  {\path{doi:10.1016/j.jcp.2021.110828}}.

\bibitem{Le2022}
T.~T. Le, M.~V. Klibanov, L.~H. Nguyen, A.~Sullivan, L.~Nguyen, Carleman
  contraction mapping for a 1{D} inverse scattering problem with experimental
  time-dependent data, Inverse Problems 38~(4) (2022) 045002.
\newblock \href {https://doi.org/10.1088/1361-6420/ac50b8}
  {\path{doi:10.1088/1361-6420/ac50b8}}.

\end{thebibliography}

\end{document}